\newcommand{\incircle}{\raisebox{2pt}{\tiny {\textcircled{\raisebox{-0.5pt} {{\tiny r}}}}}}
\newcommand{\iincircle}{\raisebox{2pt}{\tiny {\textcircled{\raisebox{-0.5pt} {{\tiny i}}}}}}
\newcommand{\ccc}{{ c}\,}
\newcommand{\GG}{{ g}\,}
\newcommand{\cc}{{\rm c}}
\DeclareMathAlphabet{\mathpzc}{OT1}{pzc}{m}{it}
\newtheorem*{prop-non}{Proposition}
\newtheorem*{thm-non}{Theorem}
\newtheorem{theorem}{Theorem}[section]
\newtheorem{corollary}[theorem]{Corollary}
\newtheorem{lemma}[theorem]{Lemma}
\newtheorem{remark}{Remark}[section]
\newtheorem{proposition}[theorem]{Proposition}
\newtheorem{thm}{Theorem}[section]
\theoremstyle{definition}
\newtheorem{definition}[thm]{Definition}
\begin{document}

 \author{A. Belkhirat and M. L. Labbi}
 \title[Composition and exterior products]{ Interactions between the composition and  exterior products of double forms and applications}
   \date{}
\subjclass[2010]{Primary 53B20, 53C21; Secondary  15A24, 15A69.}
\keywords{Composition product of double forms, exterior product of double forms, interior product of a double form, mixed exterior algebra, Pontrjagin form, Pontrjagin number, pure curvature tensor. }
\thanks{}
\begin{abstract} We translate into the double forms formalism the basic identities of Greub and Greub-Vanstone that were obtained  in the mixed exterior algebra. In particular, we  introduce a second product in the space of double forms, namely the composition product, which provides this space with a second associative algebra structure.  The composition product interacts with the exterior product of double forms;  the resulting relations provide simple  alternative proofs to  some classical   linear algebra identities as well as  to recent results in the exterior algebra of double forms.\\
We define a refinement of the notion of pure curvature of  Maillot and  we use one of the basic identities to prove that if a Riemannian $n$-manifold has $k$-pure curvature  and $n\geq 4k$  then its Pontrjagin class of degree $4k$ vanishes. 
\end{abstract}
   \maketitle

\tableofcontents

\section{Introduction}
Let $h$ be an endomorphism (or a bilinear form) of an Euclidean real vector space $(V,g)$  of dimension $n<\infty$. Recall the classical Girard-Newton identities for $1\leq r\leq n$
\begin{equation*}
rs_r(h)=\sum_{i=1}^r(-1)^{i+1}s_{r-i}(h)p_ i(h).
\end{equation*}
Where $p_i(h)$ is the trace of the endomorphism $h^{ \iincircle}=\underbrace{h\circ ... \circ h}_\text{$i$-times},$ and $s_i(h)$ are the symmetric functions in the (possibly complex) eigenvalues of $h$. It turns out that the invariants $s_i(h)$ are also traces of endomorphisms constructed from $h$ and the metric $g$ using exterior product of double forms \cite{Labbialgebraic}.\\
Another celebrated classical result which illustrates also the interaction between the composition and exterior product is the Cayley-Hamilton theorem
\begin{equation*}
\sum_{r=0}^n(-1)^rs_{n-r}(h)h^{ \incircle}=0.
\end{equation*}
The first identity is a scalar valued identity, the second one is a endomorphism (or bilinear form) valued identity. Higher double forms valued identities where obtained in \cite{Labbialgebraic}. In particular, it is shown that the infinitisimal version of the general Gauss-Bonnet theorem is a double forms valued identity of Cayley-Hamilton type which again involves the two products. Another illustration of the importance of these interactions is the  expression of all Pontrjagin numbers of a compact oriented manifold  of dimension $n=4k$ as the integral
of the following $4k$-form \cite{Labbi-Bianchi}
\begin{equation*}
 P_1^{k_1}P_2^{k_2}\cdots P_m^{k_m}=\frac{(4k)!}{[(2k)!]^2(2\pi)^{2k}}  \Big(\prod_{i=1}^{m}\frac{[(2i)!]^2}{(i!)^{2k_i}(4i)!}\Big){\rm Alt}\Big[(R\circ R)^{k_1}(R^2\circ R^2)^{k_2}\cdots (R^m\circ R^m)^{k_m}\Big].
\end{equation*}
Where $R$ is the  Riemann curvature tensor  seen as a $(2,2)$ double form,  $ k_1, k_2, ..., k_m$   are non-negative integers  such that $k_1+2k_2+...+mk_m =k$, ${\rm Alt}$ is the alternating operator, and where all the powers over double forms are taken with respect to the exterior product of double forms.\\
In this study, we investigate the interactions between these two products. The paper is organized as follows. Sections 2 and 3, are about definitions and  basic facts about the exterior and composition products of double forms. In section 4, we introduce and study the interior product of double forms which generalizes the usual Ricci contactions. Precisely, for a double form $\omega$, the interior product map $i_\omega$, which maps a double form to another double form,  is the adjoint of the exterior  multiplication map by $\omega$. In particular, if $\omega=g$ we recover the usual Ricci contraction map of double forms.\\
Section 5 is about some natural extensions of endomorphisms of $V$ onto endomorphisms of the exterior algebra of double forms. We start with an endomorphism $h:V\rightarrow V$, there exists a unique exterior algebra endomorphism $\widehat{h}:\Lambda V\rightarrow \Lambda V$ that extends $h$ and such that $\widehat{h}(1)=1.$   Next,  the space  $\Lambda V\otimes \Lambda V$ can be regarded in two ways as a $\Lambda V$-valued exterior vectors,  therefore the endomorphism  $\widehat{h}$ operates on the space $\Lambda V\otimes \Lambda V$   in two natutal ways, say $\widehat{h}_R$ and $\widehat{h}_L$. The so obtained two endomorphisms are in fact  exterior  algebra endomorphisms. We prove that  the endomorphisms $\widehat{h}_R$  and $\widehat{h}_L$  are nothing but the   right and left multiplication maps in the composition algebra, precisely we prove that
\begin{equation*}
\widehat{h}_R(\omega)=e^h\circ \omega,\, {\rm and}\,\,  \widehat{h}_L(\omega)=\omega\circ e^{(h^t)}.
\end{equation*}
Where $e^h:=1+h+\frac{h^2}{2!}+\frac{h^3}{3!}+...$ and the powers are taken with respect to the exterior product of double forms. As a consequence of this discussion we get easy proofs of classical linear algebra including Laplace expansions of the determinant.\\
In section 6,  we first  state and prove  Greub's basic identity  relating the exterior and composition  products of double forms:
 \begin{prop-non}
 If $h, h_1,...,h_p$ are bilinear forms on $V$, and $h_1...h_p$  is their exterior product then
 \begin{equation*}
\begin{split}
 i_h(h_1...h_p)&=\sum_{j}\langle h,h_j\rangle  h_1...\hat{h}_j...h_p\\
 &-\sum_{j<k}(h_j\circ h^t\circ h_k+h_k\circ h^t\circ h_j)h_1...\hat{h}_j...\hat{h}_k...h_p.
\end{split}
 \end{equation*}
 \end{prop-non}
 Consequently, for a bilinear form $k$ on $V$ ,  the contraction of $\cc k^p$  of the exterior power $k^p$ of $k$ is given by 
 \begin{equation*}
 \cc k^p=p(\cc k) k^{p-1}-p(p-1)(k \circ k)k^{p-2}.
 \end{equation*}
Using the fact that the diagonal sub-algebra (the subspace of all $(p,p)$ double forms, $p\geq 0$) is spanned by exterior products of bilinear forms on $V$, we obtain the following useful  formula as a  consequence of the previous identity. This new formula generalizes formula (15) of \cite{Labbidoubleforms} in Theorem 4.1 to double forms that are not symmetric or do not satisfy the first Bianchi identity
\begin{equation*}
\ast \Big( \frac{g^{k-p}\omega}{(k-p)!}\Big)=\sum_{r}^{}(-1)^{r+p}\frac{g^{n-p-k+r}}{(n-p-k+r)!}\frac{c^r}{r!}(\omega^t).
\end{equation*}
Where $\ast$ is the double Hodge  star operator on double forms. \\
In the same section 6, we state and prove another identity relating the exterior and composition product of double forms, namely  the following Greub-Vanstone basic identity 
\begin{thm-non}
For $1\leq p\leq n$, and for bilinear forms $h_1,...,h_p$ and $k_1,...,k_p$ we have
\[(h_1h_2...h_p)\circ (k_1k_2...k_p)=\sum_{\sigma\in S_p}(h_1\circ k_{\sigma(1)})...(h_p\circ k_{\sigma (p)})=\sum_{\sigma\in S_p}(h_{\sigma (1)}\circ k_{1})...(h_{\sigma (p)}\circ k_{p})\]
In particular, when $h=h_1=...=h_p$ and $k=k_1=...=k_p$ we have the following nice relation:
\begin{equation*}
h^p\circ k^p=p!(h\circ k)^p.
\end{equation*}
\end{thm-non}
The last section 7,  is devoted to the study of $p$-pure Riemannian manifolds. Let $1\leq p\leq n/2$ be a positive integer, a Riemannian $n$-manifold  is said to have a \emph{$p$-pure curvature tensor} if at each point of the manifold    the curvature operator that is associated to the exterior power  $R^p$ of  the Riemann curvature tensor $R$  has decomposed eigenvectors. For $p=1$, we recover the usual pure Riemannian manifolds of Maillot. A pure manifold is always $p$-pure for $p\geq 1$, we give examples of $p$-pure Riemannian manifolds that are $p$-pure for some $p>1$ without being pure. The main result of this section is the following
\begin{thm-non}
If a Riemannian $n$-manifold is $k$-pure and $n\geq 4k$  then its Pontrjagin class of degree $4k$ vanishes. 
\end{thm-non}
The previous theorem refines a result by Maillot  in \cite{Maillot}, where he proved  that all pontrjagin classes of a pure Riemannian manifold vanish. \\
Finally, we emphasize that sections 3,4,5 and 6 are mainly a translation into the language of double forms of some works by Greub \cite{Greub-book} and Greub-Vanstone \cite{Greub-Vanstone} in the context of Mixed exterior algebra. We hope that by this contribution, we shed light on  these important  contributions of Greub and Vanstone which  are  not well  known whithin the geometers community.

\section{ The Exterior Algebra of Double Forms}\label{prem.}

Let $(V,g)$ be an Euclidean real vector space  of finite dimension $n$.  In the
 following we shall identify whenever convenient (via their Euclidean structures),
the vector spaces
 with their duals. Let
  $\Lambda V^{*}=\bigoplus_{p\geq 0}\Lambda^{p}V^{*}$  (resp.
   $\Lambda V=\bigoplus_{p\geq 0}\Lambda^{p}V$) denotes the exterior algebra
 of the dual space  $V^* $ (resp.   $V$). Considering  tensor products,
  we define the space of double exterior forms of $V$ (resp. double exterior vectors)  as
 $${\mathcal D}(V^*)= \Lambda V^{*}\otimes \Lambda V^{*}=\bigoplus_{p,q\geq 0}
  {\mathcal D}^{p,q}(V^*),$$
$${\rm resp.}\,\, {\mathcal D}(V)= \Lambda V\otimes \Lambda V=\bigoplus_{p,q\geq 0}
  {\mathcal D}^{p,q}(V),$$
 where $  {\mathcal D}^{p,q}(V^*)= \Lambda^{p}V^{*} \otimes  \Lambda^{q}V^{*},$ resp. $  {\mathcal D}^{p,q}(V)= \Lambda^{p}V \otimes  \Lambda^{q}V.$
The space ${\mathcal D}(V^*)$ is naturally  a bi-graded associative  algebra, called \emph{double exterior algebra of $V$},
 where for $\omega_1=\theta_1\otimes \theta_2\in { \mathcal D}^{p,q}(V^*)$ and
 $\omega_2=\theta_3\otimes \theta_4\in  {\mathcal D}^{r,s}(V^*)$, the multiplication is given  by
 \begin{equation}
 \label{def:prod}
 \omega_1\omega_2= (\theta_1\otimes \theta_2 )(\theta_3\otimes
 \theta_4)=
 (\theta_1\wedge \theta_3 )\otimes(\theta_2\wedge \theta_4)\in
    {\mathcal D}^{p+r,q+s}(V).\end{equation}
Where $\wedge$ denotes the standard exterior product on the exterior algebra $ \Lambda V^{*}$. The product in the exterior algebra of double vectors is defined in the same way.\\
A \emph{double exterior  form of degree $(p,q)$}, (resp.  a \emph{double exterior vector of degree $(p,q)$}) is by definition an  element of the tensor product
    $  {\mathcal D}^{p,q}(V^*)= \Lambda^{p}V^* \otimes \Lambda^{q}V^*$, (resp.    $  {\mathcal D}^{p,q}(V)= \Lambda^{p}V\otimes \Lambda^{q}V$.
     It can be identified canonically
     with a bilinear form $\Lambda^pV\times\Lambda^qV\rightarrow {\bf R}$, which in turn can be seen as
     a multilinear form which is skew symmetric in the first $p$-arguments and also
     in the last $q$-arguments.\\
The above multiplication in ${\mathcal D}(V^*)$  (resp. ${\mathcal D}(V)$) shall be  called the \emph{exterior product of double forms,} (resp. \emph{exterior product of double vectors.})\\

Recall that the (Ricci) contraction map, denoted by $c$,  maps  ${\mathcal D}^{p,q}(V^*)$ into  ${\mathcal D}^{p-1,q-1}(V^*)$. For a double form  $\omega \in  {\mathcal D}^{p,q}(V^*)$  with $p\geq 1$ and $q\geq 1$, we  have
$$\ccc \omega(x_1\wedge...\wedge x_{p-1},y_1\wedge...\wedge y_{q-1})=
\sum_{j=1}^{n}\omega(e_j\wedge x_1\wedge... x_{p-1},
e_j\wedge y_1\wedge...\wedge y_{q-1})$$
where $\left\{e_1,...,e_n\right\}$ is an arbitrary  orthonormal basis of $V$ and $\omega$ is seen  as a bilinear form as explained above.If $p=0$ or $q=0$, we set $\ccc \omega=0$.\\
It turns out, see \cite{Labbidoubleforms}, that the contraction map $\ccc$ on ${\mathcal D(V^*)}$  is  the adjoint of  the multiplication map by the metric  $g$ of $V$,  precisely we have for $\omega_1, \omega_2 \in  {\mathcal D}(V^*)$ the following
\begin{equation}\label{adj:gc}
<\GG \omega_1,\omega_2>=<\omega_1,\ccc \omega_2>.
\end{equation}
Suppose now that  we have fixed an orientation on the vector space  $V$.
The classical Hodge star operator  $*:\Lambda^{p}V^*\rightarrow \Lambda^{n-p}V^*$ can be extended naturally to operate
on double forms as follows. For a $(p,q)$-double form $\omega$ (seen as a bilinear form), $*\omega$ is  the  $(n-p,n-q)$-double form  given by
\begin{equation}\label{remark1}
 *\omega(.,.)=(-1)^{(p+q)(n-p-q)}\omega(*.,*.).
\end{equation}
Note that $*\omega$ does not depend on the chosen orientation as the usual Hodge star operator is applied twice.
The so-obtained  operator is still called the Hodge star operator operating on double forms or the \emph{double Hodge star operator}. This new operator  provides another simple  relation between the contraction map $c$ of double forms
and the multiplication map by the metric as follows:
\begin{equation}
\label{A}
\GG \omega=*\,\ccc*\omega.
\end{equation}
Furthermore, the double Hodge star operator generates the inner product of double forms  as follows.  For any two double forms $\omega,\theta\in {\mathcal D}^{p,q}$ we have
\begin{equation}\label{remark2}
<\omega,\theta>=*\Bigl(  \omega(*\theta)\Bigr)=(-1)^{(p+q)(n-p-q)}*\Bigl((*\omega)\theta\Bigr).
\end{equation}
The reader is kindly  invited to consult the  proofs  of the above  relations in  \cite{Labbidoubleforms}.

\begin{definition}
The subspace
$$\Delta V^*=\bigoplus_{p\geq 0}
  {\mathcal D}^{p,p}(V^*),\, ({\rm resp.}\,\, \Delta V=\bigoplus_{p\geq 0}
  {\mathcal D}^{p,p}(V))$$
of $\mathcal{D}(V^*)$ (resp. $\mathcal{D}(V))$ is a commutative subalgebra and shall be called the \emph{ diagonal subalgebra}.
\end{definition}

\section{The composition  Algebra of Double Forms}
The space ${\mathcal D}= \Lambda V^{*}\otimes \Lambda V^{*}$ is canonically isomorphic to the space of linear endomorphisms $L( \Lambda V,\Lambda V)$. Explicitely, we have  the following canonical isomorphism
\begin{equation}
\begin{split}
 \mathcal{T}: \Lambda V^{*}\otimes \Lambda V^{*}&\rightarrow  L( \Lambda V,\Lambda V)\\
\omega_1\otimes \omega_2 &\rightarrow   \mathcal{T}(\omega_1\otimes \omega_2)
\end{split}
\end{equation}
is given by
\[   \mathcal{T}(\omega_1\otimes \omega_2)(\theta)=\langle \omega_1^\sharp,\theta\rangle \omega_2^\sharp.\]
 Where   $ \omega_i^\sharp$  denotes the  exterior vector dual to the exterior form  $\omega_i$.\\
Note that if we look at a double form $\omega$  as a bilinear form on $ \Lambda V$, then  $ \mathcal{T}(\omega)$ is no thing but the canonical linear operator asociated to the bilinear form $\omega$.\\

It is easy to see that $ \mathcal{T}$ maps for each $p\geq 1$ the double form $\frac{g^p}{p!}$ to the identity map in $L( \Lambda^p V,\Lambda^p V)$, in particular $ \mathcal{T}$ maps the double form $1+g+\frac{g^2}{2!}+ ...$ onto the identity map in $L( \Lambda V,\Lambda V)$.\\
The space $L( \Lambda V,\Lambda V)$ is an algebra under the composition product $\circ$  that is not isomorphic to the algebra of double forms. Pulling back the operation $\circ$ to  ${\mathcal D}$ we obtain a second multiplication in  ${\mathcal D}$ which we shall call \emph{the composition product of double forms} or  \emph{Greub's product of double forms} and will be still denoted by $\circ$.\\
More explicitly, given  two simple double forms
 $\omega_1=\theta_1\otimes \theta_2\in { \mathcal D}^{p,q}$ and
    $\omega_2=\theta_3\otimes \theta_4\in  {\mathcal D}^{r,s}$, we have
    \begin{equation}
\omega_1\circ\omega_2=(\theta_1\otimes \theta_2)\circ (\theta_3\otimes \theta_4)=\langle \theta_1,\theta_4\rangle \theta_3\otimes \theta_2\in  {\mathcal D}^{r,q}.
\end{equation}
It is clear that $\omega_1\circ\omega_2=0$ unless $p=s$.\\
Alternatively, if we look at $\omega_1$ and $\omega_2$ as bilinear forms, then the composition product read \cite{Labbialgebraic}

\begin{equation}
\omega_1\circ\omega_2(u_1,u_2)
=\sum_{\scriptstyle i_1<i_2<...<i_p}\omega_2(u_1,e_{i_1}\wedge ...\wedge e_{i_p})\omega_1(e_{i_1}\wedge ...\wedge e_{i_p},u_2).
\end{equation}
Where $\{e_1,...,e_n\}$ is an arbitrary orthonormal basis of $(V,g)$,  $u_1\in \Lambda^r$  is an $r$-vector and $u_2\in \Lambda^q$ is a $q$-vector in $V$.

We list below some properties of this product.
\subsubsection{Transposition of double forms}
For a double form $\omega \in  {\mathcal D}^{p,q}$, we denote by $\omega^t\in  {\mathcal D}^{q,p}$ the transpose of $\omega$, which is defined by
\begin{equation}
\omega^t(u_1,u_2)=\omega(u_2,u_1).
\end{equation}
Alternatively, if $\omega=\theta_1\otimes \theta_2$ then

\begin{equation}
\omega^t=(\theta_1\otimes \theta_2)^t=\theta_2\otimes \theta_1.
\end{equation}

A double form $\omega$ is said to be \emph{ a symmetric double form}  if  $\omega^t=\omega$.
\begin{proposition}\label{transpose-properties}
Let $\omega_1$, $\omega_2$ be two arbitrary  elements of ${\mathcal D}$, then
\begin{enumerate}
\item  $(\omega_1\circ \omega_2)^t=\omega_2^t\circ \omega_1^t$ and  $(\omega_1 \omega_2)^t=\omega_1^t\omega_2^t$.
\item $ \mathcal{T}(\omega_1^t)=( \mathcal{T}(\omega_1))^t$.
\item If $\omega_3$ is a third double form then $\langle \omega_1\circ\omega_2,\omega_3\rangle=\langle \omega_2,\omega_1^t\circ\omega_3\rangle=\langle \omega_1,\omega_3\circ \omega_2^t\rangle.$

\end{enumerate}
\end{proposition}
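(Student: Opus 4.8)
The plan is to reduce each of the three assertions to a computation on decomposable double forms $\omega=\theta\otimes\theta'$ (with $\theta,\theta'$ ranging over $\Lambda V^*$), since the two products, the transpose, the isomorphism $\mathcal{T}$, and the inner product are all (bi)linear and such elements span $\mathcal{D}$. The explicit formulas recorded above for $\omega_1\omega_2$, $\omega_1\circ\omega_2$, $\omega^t$, $\mathcal{T}(\omega)$, and for the (tensor-product) inner product $\langle\theta_1\otimes\theta_2,\theta_3\otimes\theta_4\rangle=\langle\theta_1,\theta_3\rangle\langle\theta_2,\theta_4\rangle$ then turn each statement into a one-line check. The only point that needs attention throughout is the bookkeeping of bidegrees: $\omega_1\circ\omega_2$ of two simple double forms vanishes unless the pertinent exterior degrees match, and an inner product of homogeneous double forms vanishes unless the two bidegrees coincide; making sure these constraints are imposed consistently so that the surviving terms on the two sides of each identity correspond is the one place an error could creep in, though nothing deep is involved.

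For part (1), write $\omega_1=\theta_1\otimes\theta_2$ and $\omega_2=\theta_3\otimes\theta_4$. On the exterior side, transposition simply swaps the two $\Lambda V^*$-factors and the exterior product of double forms acts factor-wise, so $(\omega_1\omega_2)^t=\bigl((\theta_1\wedge\theta_3)\otimes(\theta_2\wedge\theta_4)\bigr)^t=(\theta_2\wedge\theta_4)\otimes(\theta_1\wedge\theta_3)=(\theta_2\otimes\theta_1)(\theta_4\otimes\theta_3)=\omega_1^t\omega_2^t$; no reversal of the factors is needed here, in contrast with the composition product. For the latter, $(\omega_1\circ\omega_2)^t=\bigl(\langle\theta_1,\theta_4\rangle\,\theta_3\otimes\theta_2\bigr)^t=\langle\theta_1,\theta_4\rangle\,\theta_2\otimes\theta_3$, while $\omega_2^t\circ\omega_1^t=(\theta_4\otimes\theta_3)\circ(\theta_2\otimes\theta_1)=\langle\theta_4,\theta_1\rangle\,\theta_2\otimes\theta_3$, and the two agree by symmetry of $\langle\cdot,\cdot\rangle$. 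For part (2), the cleanest route is to regard a double form $\omega$ as a bilinear form on $\Lambda V$: by the remark following the definition of $\mathcal{T}$, the operator $\mathcal{T}(\omega)$ is then the one canonically associated to that bilinear form, and passing from $\omega$ to $\omega^t$ — i.e. interchanging the two arguments of the bilinear form — is exactly passing to the adjoint operator with respect to the inner product of $\Lambda V$. Equivalently, on $\omega=\theta_1\otimes\theta_2$ one checks directly that both $\mathcal{T}(\omega^t)$ and $\mathcal{T}(\omega)^t$ send $\theta$ to $\langle\theta_2^\sharp,\theta\rangle\,\theta_1^\sharp$.

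For part (3) I would take $\omega_1=\theta_1\otimes\theta_2$, $\omega_2=\theta_3\otimes\theta_4$, $\omega_3=\theta_5\otimes\theta_6$ and expand the three quantities: one reads off the common value $\langle\theta_1,\theta_4\rangle\langle\theta_3,\theta_5\rangle\langle\theta_2,\theta_6\rangle$ from $\langle\omega_1\circ\omega_2,\omega_3\rangle$, from $\langle\omega_2,\omega_1^t\circ\omega_3\rangle=\langle\theta_3\otimes\theta_4,\ \langle\theta_2,\theta_6\rangle\,\theta_5\otimes\theta_1\rangle$, and from $\langle\omega_1,\omega_3\circ\omega_2^t\rangle=\langle\theta_1\otimes\theta_2,\ \langle\theta_5,\theta_3\rangle\,\theta_4\otimes\theta_6\rangle$. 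A conceptually cleaner alternative, which makes the obstacle disappear altogether, is to use that $\mathcal{T}$ carries $\circ$ to composition of operators (this is how $\circ$ was defined on $\mathcal{D}$) and, as one checks on decomposables, the inner product of double forms to the trace pairing $\langle\omega,\eta\rangle=\tr\bigl(\mathcal{T}(\omega)^t\circ\mathcal{T}(\eta)\bigr)$; then, invoking parts (1) and (2), both $\langle\omega_2,\omega_1^t\circ\omega_3\rangle$ and $\langle\omega_1,\omega_3\circ\omega_2^t\rangle$ reduce, by cyclicity of the trace, to $\tr\bigl(\mathcal{T}(\omega_2)^t\circ\mathcal{T}(\omega_1)^t\circ\mathcal{T}(\omega_3)\bigr)$, which is exactly $\langle\omega_1\circ\omega_2,\omega_3\rangle$. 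Either way, the substance of the proof is entirely formal; the main (and only) care required is, as noted, the consistent handling of all the bidegree conditions.
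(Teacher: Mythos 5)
Your proposal is correct and follows essentially the same route as the paper: reduce to decomposable double forms and verify each identity by direct computation with the explicit formulas for $\circ$, the exterior product, the transpose, $\mathcal{T}$, and the inner product. The alternative conceptual readings you sketch (transpose as operator adjoint for (2), the trace pairing plus cyclicity for (3)) are valid refinements but do not change the substance of the argument.
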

\begin{proof}
Without loss of generality, we may assume that
$\omega_1=\theta_1\otimes\theta_2$ and $\omega_2=\theta_3\otimes\theta_4$
then,
\begin{equation*}
\begin{split}
 (\omega_1\circ \omega_2)^t &=\Bigl((\theta_1\otimes \theta_2)\circ (\theta_3\otimes \theta_4)\Bigr)^t = <\theta_1,\theta_4>(\theta_3\otimes\theta_2)^t=<\theta_1,\theta_4>(\theta_2\otimes\theta_3)\\
 &=(\theta_4\otimes \theta_3)\circ (\theta_2\otimes\theta_1)=(\theta_3\otimes\theta_4)^t\circ(\theta_1\otimes\theta_2)^t=\omega_2^t\circ\omega_1^t.
\end{split}
\end{equation*}
Similarly.
\[ (\omega_1 \omega_2)^t=\theta_2\wedge\theta_4\otimes \theta_1\wedge\theta_3=\omega_1^t\omega_2^t.\]
This proves $(1)$. Next, we prove prove relation $(2)$ as follows, 

\begin{equation*}
\begin{split}
 < \mathcal{T} (\omega_1^t)(u_1),u_2>&=< \mathcal{T} ((\theta_1\otimes \theta_2)^t)(u_1),u_2>=< \mathcal{T} (\theta_2\otimes \theta_1)(u_1),u_2>\\
&=<\theta_2^\sharp ,u_1>    <\theta_1^\sharp, u_2>=<u_1, <\theta_1^\sharp,u_2>\theta_2^\sharp>=<u_1,  \mathcal{T}(\theta_1\otimes \theta_2)u_2>\\
&=\langle \Bigl( \mathcal{T}(\theta_1\otimes \theta_2)\Bigr)^t(u_1),u_2\rangle=\langle \Bigl( \mathcal{T}(\omega_1)\Bigr)^t(u_1),u_2\rangle.
\end{split}
\end{equation*}
Finally we prove (3). without loss of  generality assume as above that the three double forms are simple, let $\omega_3=\theta_5\otimes\theta_6$ then a simple computation shows that
\begin{align*}
\langle \omega_1\circ\omega_2,\omega_3\rangle &=\langle \theta_1,\theta_4\rangle \langle \theta_3\otimes  \theta_2,\theta_5\otimes \theta_6\rangle=\langle \theta_1,\theta_4\rangle \langle \theta_3,\theta_5\rangle \langle \theta_2,\theta_6\rangle .\\
\langle \omega_2,\omega_1^t\circ\omega_3\rangle &=\langle \theta_3\otimes\theta_4,\langle \theta_2,\theta_6\rangle  \theta_5\otimes \theta_1=\langle \theta_1,\theta_4\rangle \langle \theta_3,\theta_5\rangle \langle \theta_2,\theta_6\rangle .\\
\langle \omega_1,\omega_3\circ \omega_2^t\rangle  &=\langle \theta_1\otimes\theta_2,\langle \theta_5,\theta_3\rangle  \theta_4\otimes \theta_6=\langle \theta_1,\theta_4\rangle \langle \theta_3,\theta_5\rangle \langle \theta_2,\theta_6\rangle .\\
\end{align*}
This completes the proof of the proposition.
\end{proof}
The composition product provides another  useful formula for the inner product of double forms as follows
\begin{proposition}[\cite{Labbialgebraic}]
The inner product of two double forms $\omega_1,\omega_2\in  {\mathcal D}^{p,q}$ is the full contraction of the composition  product $\omega_1^t\circ\omega_2$ or $\omega_2^t\circ\omega_1$ , precisely we have
\begin{equation}
\langle \omega_1,\omega_2\rangle=\frac{1}{p!}\cc^p(\omega_2^t\circ\omega_1)=\frac{1}{p!}\cc^p(\omega_1^t\circ\omega_2.)
\end{equation}
\end{proposition}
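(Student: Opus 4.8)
The plan is to deduce the identity from two elementary facts, each a short consequence of material already in the excerpt, and then assemble them using part~(3) of Proposition~\ref{transpose-properties}.

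First I would record the reformulation of the full contraction in terms of the inner product: for every $\theta\in\mathcal{D}^{p,p}$,
\begin{equation*}
\frac{1}{p!}\cc^p\theta=\Bigl\langle\tfrac{g^p}{p!},\,\theta\Bigr\rangle .
\end{equation*}
This follows by applying the adjunction \eqref{adj:gc} $p$ times: writing $g^p=g\cdot g^{p-1}$ and using that $\cc$ is the adjoint of multiplication by $g$, one strips off one factor of $g$ and one contraction at each step, arriving at $\tfrac{1}{p!}\langle 1,\cc^p\theta\rangle=\tfrac{1}{p!}\cc^p\theta$, since the inner product on $\mathcal{D}^{0,0}=\mathbf R$ is ordinary multiplication of scalars. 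Second, I would check that $\tfrac{g^p}{p!}$ is a right unit for the composition product on $\mathcal{D}^{p,q}$, i.e. $\omega\circ\tfrac{g^p}{p!}=\omega$ for all $\omega\in\mathcal{D}^{p,q}$. This is immediate from the isomorphism $\mathcal{T}$: by definition of the composition product $\mathcal{T}$ turns $\circ$ into composition of operators, $\mathcal{T}(\tfrac{g^p}{p!})$ is the identity of $L(\Lambda^pV,\Lambda^pV)$ (equivalently the projection of $\Lambda V$ onto $\Lambda^pV$), and $\mathcal{T}(\omega)$ already vanishes on $\Lambda^kV$ for $k\neq p$; hence $\mathcal{T}(\omega\circ\tfrac{g^p}{p!})=\mathcal{T}(\omega)\circ\mathcal{T}(\tfrac{g^p}{p!})=\mathcal{T}(\omega)$, and injectivity of $\mathcal{T}$ gives the claim.

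With these in hand the computation is short. Since $\omega_2^t\in\mathcal{D}^{q,p}$ and $\omega_1\in\mathcal{D}^{p,q}$ one has $\omega_2^t\circ\omega_1\in\mathcal{D}^{p,p}$, so the first fact applies and then I would invoke the middle identity of Proposition~\ref{transpose-properties}(3), namely $\langle\alpha\circ\beta,\gamma\rangle=\langle\beta,\alpha^t\circ\gamma\rangle$, with $\alpha=\omega_2^t$, $\beta=\omega_1$, $\gamma=\tfrac{g^p}{p!}$:
\begin{equation*}
\frac{1}{p!}\cc^p(\omega_2^t\circ\omega_1)=\Bigl\langle\tfrac{g^p}{p!},\,\omega_2^t\circ\omega_1\Bigr\rangle=\Bigl\langle\omega_2^t\circ\omega_1,\,\tfrac{g^p}{p!}\Bigr\rangle=\Bigl\langle\omega_1,\,\omega_2\circ\tfrac{g^p}{p!}\Bigr\rangle=\langle\omega_1,\omega_2\rangle ,
\end{equation*}
using $(\omega_2^t)^t=\omega_2$, the symmetry of the inner product, and the right-unit property. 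The remaining equality $\tfrac{1}{p!}\cc^p(\omega_1^t\circ\omega_2)=\langle\omega_1,\omega_2\rangle$ then follows by interchanging the roles of $\omega_1$ and $\omega_2$ and using $\langle\omega_2,\omega_1\rangle=\langle\omega_1,\omega_2\rangle$.

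I expect the only real care needed to be the bookkeeping of bidegrees — in particular remembering that $\mathcal{T}(\tfrac{g^p}{p!})$ is the identity only on the component $\Lambda^pV$ — rather than any genuine difficulty; all the substance is already contained in the adjunction \eqref{adj:gc} and in the definition of $\mathcal{T}$. A fully self-contained alternative, avoiding even the unit property, is to reduce by bilinearity to simple double forms $\omega_i=\theta_i\otimes\eta_i$, where both sides collapse to the single elementary identity $\cc^p(\theta_1\otimes\theta_3)=p!\,\langle\theta_1,\theta_3\rangle$ for $\theta_1,\theta_3\in\Lambda^pV^*$; this in turn comes from $g^p=p!\sum_{|I|=p}e_I^\ast\otimes e_I^\ast$ together with \eqref{adj:gc}.
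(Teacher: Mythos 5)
Your argument is correct and is essentially the paper's own proof: both express the full contraction as $\langle\,\cdot\,,\tfrac{g^p}{p!}\rangle$ via the adjunction between $\cc$ and multiplication by $g$, then apply Proposition \ref{transpose-properties}(3) together with the fact that $\tfrac{g^p}{p!}$ is a unit for the composition product. You merely supply the verification of the unit property (and an optional elementary alternative) that the paper takes for granted.
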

\begin{proof}
We use the fact that the contraction map $\cc$ is the adjoint of the exterior multiplication map by $g$ and the above proposition  as follows
\[\frac{1}{p!}\cc^p(\omega_2^t\circ\omega_1)=\langle \omega_2^t\circ\omega_1,\frac{g^p}{p!}\rangle =\langle \omega_1,\big( \omega_2^t\big)^t\circ \frac{g^p}{p!}=\langle \omega_1,\omega_2\rangle.\]
Where we used the fact that $\frac{g^p}{p!}$ is a unit element in the composition algebra. The prove of the second relation is similar.
\end{proof}
\begin{remark}\label{remark1}
The inner product used by Greub and Vanstone in \cite{Greub-book, Vanstone, Greub-Vanstone} is the pairing product which can be defined by
\[\langle\langle  \omega_1,\omega_2\rangle\rangle=\frac{1}{p!}\cc^p(\omega_2\circ\omega_1)=\frac{1}{p!}\cc^p(\omega_1\circ\omega_2.)\]
This is clearly different from the inner product that we are using in this paper. The two products coincide if $\omega_1$ or $\omega_2$ is a symmetric double form.
\end{remark}

\section{Interior  product for double forms}
Recall that for a vector $v\in V$,  the interior  product map $i_v:  \Lambda^p V^{*}\rightarrow \Lambda^{p-1} V^{*}$, for $p\geq 1$,   is defined by declaring
$$i_v\alpha(x_2,...,x_p)=\alpha(v,x_2,...,x_p).$$
There are two natural ways to extend this operation to double forms seen as bilinear maps as above,  Precisely we define the inner product map $i_v: {\mathcal D}^{p,q}\rightarrow {\mathcal D}^{p-1,q}$  for $p\geq 1$, and the adjoint   inner product map
 $\tilde{i}_v:   {\mathcal D}^{p,q}\rightarrow {\mathcal D}^{p,q-1}$ for $q\geq 1$ by declaring

$$i_v\omega (x_2,...,x_p;y_1,...,y_q))=\omega(v,x_2,...,x_p;y_1,...,y_q),$$
and
$$\tilde{i}_v(\omega)(x_1,...,x_p;y_2,...,y_q))=\omega(x_1,...,x_p;v,y_2,...,y_q).$$

Note that the first map are nothing but the usual interior product  of vector valued $p$-forms. The second map can be obtained from the first one via transposition as follows
$$\tilde{i}_v(\omega)= \Big( i_v(\omega^t)\Big)^t.$$
 In particular, the maps $i_v$ and  $\tilde{i}_v$  satisfy the same  algebraic properties as  the usual interior product of usual forms.  \\

Next,  we define a new  natural (diagonal) interior product on double forms as follows. Let $v\otimes w\in V\otimes V$  be a decomposable $(1,1)$  double vector, we define  $i_{v\otimes w}: {\mathcal D}^{p,q}\rightarrow {\mathcal D}^{p-1,q-1}$  for $p,q\geq 1$ by
$$i_{v\otimes w}=i_v\circ \tilde{i}_w.$$
Equivalently,
$$i_{v\otimes w}\omega (x_2,...,x_p;y_2,...,y_q))=\omega(v,x_2,...,x_p;w,y_2,...,y_q).$$
The previous map is obviously bilinear with respect to $v$ and $w$ and therefore can be extended and defined for  any $(1,1)$  double vector in $V\otimes V$.\\
Let $h$ be a  $(1,1)$ double form, that is a  bilinear form on $V$. Then in a basis of $V$ we have $h=\sum_ih(e_i,e_j)e_i^*\otimes e_j^*$. The dual $(1,1)$  double vector associated to $h$ via the metric $g$  denoted by $h^\sharp$, is  by definition
$$h^\sharp=\sum_ih(e_i,e_j)e_i\otimes e_j.$$
We then define the interior product  $i_h$ to be the interior product $i_{h^\sharp }$.
\begin{proposition}\label{iR}
Let $h$ be an arbitrary  $(1,1)$  double form, then
\begin{enumerate}
\item for any  $(1,1)$  double form $k$ we have
$$i_{h}k=i_{k}h=\langle h,k\rangle.$$
\item For any  $(2,2)$  double form $R$ we have
$$i_hR=\overset{\circ}{R}h.$$
Where for a  $(1,1)$  double form $h$,   $\overset{\circ}{R}h$ denotes the operator defined for instance in \cite{Besse-book}, by $$\overset{\circ}{R}h(a,b)=\sum_{i,j}h(e_i,e_j)R(e_i,a;e_j,b).$$
\item The exterior multiplication map by $h$ in  ${\mathcal D}(V^*)$ is the adjoint of the interior product map $i_h$, that is
$$\langle i_h \omega_1,\omega_2\rangle=\langle \omega_1,h\omega_2\rangle.$$
\item For $h=g$, we have  $i_g=c$ is the contraction map in  ${\mathcal D}(V^*)$ as  defined in the introduction.
\end{enumerate}
\end{proposition}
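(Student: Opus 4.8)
The plan is to verify the four assertions in turn, reducing everything to computations in an orthonormal basis $\{e_1,\dots,e_n\}$ of $V$ together with the elementary adjointness relation for ordinary exterior forms; the only non-formal point is (3), and I will deduce (4) from it (with a direct check as an alternative).

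For (1), I would write $h^\sharp=\sum_{i,j}h(e_i,e_j)\,e_i\otimes e_j$ and feed this into the defining formula for $i_{v\otimes w}$. Since a $(1,1)$ double form becomes a scalar after one application of such an interior product, $i_h k=\sum_{i,j}h(e_i,e_j)\,(i_{e_i\otimes e_j}k)=\sum_{i,j}h(e_i,e_j)\,k(e_i;e_j)$, which is exactly $\langle h,k\rangle$ for the inner product on $\mathcal D^{1,1}$; relabelling the summation indices shows this expression is symmetric in $h$ and $k$, so $i_k h=\langle h,k\rangle$ as well. For (2), the same expansion gives $i_hR(a;b)=\sum_{i,j}h(e_i,e_j)\,(i_{e_i\otimes e_j}R)(a;b)=\sum_{i,j}h(e_i,e_j)\,R(e_i,a;e_j,b)=\overset{\circ}{R}h(a,b)$ by the definition quoted from \cite{Besse-book}. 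Both are one-line checks once the basis expansion of $h^\sharp$ is in place.

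The substance is (3). Since $h\mapsto h^\sharp$, $h^\sharp\mapsto i_{h^\sharp}$, and $h\mapsto(\omega_2\mapsto h\,\omega_2)$ are all linear, both sides of $\langle i_h\omega_1,\omega_2\rangle=\langle\omega_1,h\,\omega_2\rangle$ are linear in $h$, so it suffices to treat a decomposable $h=\alpha\otimes\beta$ with $\alpha,\beta\in V^*$, for which $h^\sharp=\alpha^\sharp\otimes\beta^\sharp$ and $i_h=i_{\alpha^\sharp}\circ\tilde{i}_{\beta^\sharp}$. Likewise it is enough to test on decomposable $\omega_1=\phi_1\otimes\phi_2\in\mathcal D^{p,q}$ and $\omega_2=\theta_1\otimes\theta_2\in\mathcal D^{p-1,q-1}$. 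Then $i_h\omega_1=(i_{\alpha^\sharp}\phi_1)\otimes(i_{\beta^\sharp}\phi_2)$ and $h\,\omega_2=(\alpha\wedge\theta_1)\otimes(\beta\wedge\theta_2)$, and since the inner product on $\mathcal D^{p,q}$ is the tensor product of those on $\Lambda^pV^*$ and $\Lambda^qV^*$, both sides split as a product over the two legs:
\[\langle i_h\omega_1,\omega_2\rangle=\langle i_{\alpha^\sharp}\phi_1,\theta_1\rangle\,\langle i_{\beta^\sharp}\phi_2,\theta_2\rangle,\qquad \langle\omega_1,h\,\omega_2\rangle=\langle\phi_1,\alpha\wedge\theta_1\rangle\,\langle\phi_2,\beta\wedge\theta_2\rangle.\]
The classical fact that interior multiplication by $\alpha^\sharp$ is the adjoint of exterior multiplication by $\alpha$ on $\Lambda V^*$ matches the two factors, proving (3). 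Finally, (4) follows from (3): combining $\langle\omega_1,i_g\omega_2\rangle=\langle g\,\omega_1,\omega_2\rangle$ with \eqref{adj:gc}, which says $c$ is the adjoint of multiplication by $g$, yields $\langle\omega_1,i_g\omega_2\rangle=\langle\omega_1,c\,\omega_2\rangle$ for every $\omega_1$, hence $i_g=c$ by non-degeneracy of the inner product; alternatively one checks it at once from $g^\sharp=\sum_ie_i\otimes e_i$ and the definition of $c$ in the introduction.

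I expect the only real care to be needed in (3): tracking which of $i_{\alpha^\sharp}$, $\tilde{i}_{\beta^\sharp}$ acts on which tensor leg under the identification of a double form with a bilinear map, and making sure the normalization conventions in the inner product on $\mathcal D^{p,q}$ agree with those used for ordinary forms so that the two legs genuinely decouple. Once the decomposable reduction is set up cleanly, the rest is purely formal.
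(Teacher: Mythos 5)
Your proposal is correct and follows essentially the same route as the paper: parts (1) and (2) by orthonormal-basis expansion of $h^\sharp$, part (3) by reducing to a decomposable $h$ and invoking the classical adjointness of interior and exterior multiplication on each tensor leg (the paper applies this adjointness twice to the whole double form rather than splitting $\omega_1,\omega_2$, but the underlying fact is identical), and part (4) by either the direct basis check the paper uses or, equivalently, uniqueness of adjoints via \eqref{adj:gc}. No gaps.
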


\begin{proof} To prove the first assertion, assume that $h=\sum_{i,j}h(e_i,e_j)e_i^*\otimes e_j^*$ and $k=\sum_{r,s}k(e_r,e_s)e_r^*\otimes e_s^*$, where $(e_i^*)$ is an orthonormal basis of $V^*$. Then
\begin{align*}
i_{h}k&=\sum_{i,j,r,s}h(e_i,e_j)k(e_r,e_s)i_{e_i\otimes e_j}(e_r^*\otimes e_s^*)=\sum_{i,j,r,s}h(e_i,e_j)k(e_r,e_s)\langle e_i,e_r\rangle\langle e_j,e_s\rangle\\
&=\sum_{i,j}h(e_i,e_j)k(e_i,e_j)=\langle h,k\rangle.
\end{align*}
Next, we have
\begin{equation*}
i_hR(a,b)=\sum_{i,j}h(e_i,e_j)i_{e_i\otimes e_j}R(a,b)=\sum_{i,j}h(e_i,e_j)R(e_i,a;e_j,b)=\overset{\circ}{R}h(a,b).
\end{equation*}
This proves statement 2. To prove the third one, assume without loss of generality that $h=v^*\otimes w^*$ is decomposed, then
\begin{align*}
\langle i_h(\omega_1),\omega_2\rangle  &=\langle i_v \circ \tilde{i}_w(\omega_1),\omega_2\rangle
 =\langle  \tilde{i}_w(\omega_1),(v^*\otimes1)\omega_2\rangle\\
 &=\langle  \omega_1,(1\otimes w^*)(v^*\otimes1)\omega_2\rangle
=\langle \omega_1,(v^*\otimes w^*)\omega_2\rangle\\
&=\langle \omega_1,h\omega_2\rangle.
\end{align*}
To prove the last relation 4, let $(e_i^*)$ be an orthonormal basis of $V^*$ then  $g=\sum_{i=1}^ne_i^*\otimes e_i^*$ and 
\begin{align*}i_g\omega(x_1,...,x_{p-1}&;y_1,...,y_{q-1})=\sum_{i=1}^ni_{e_i}\circ \tilde{i}_{e_i}\omega(x_1,...,x_{p-1};y_1,...,y_{q-1})\\
&= \sum_{i=1}^n \omega(e_i,x_1,...,x_{p-1};e_i,y_1,...,y_{q-1})=\cc\omega(x_1,...,x_{p-1};y_1,...,y_{q-1}).
\end{align*}
This completes the proof of the proposition.
\end{proof}

More generally, for a fixed double form $\psi\in {\mathcal D}(V^*)$, following Greub we denote by $\mu_{\psi}: {\mathcal D}(V^*)\rightarrow {\mathcal D}(V^*)$ the left exterior multiplication map by $\psi$, precisely
$$\mu_{\psi}(\omega)=\psi\omega.$$
We then define the map $i_{\psi}: {\mathcal D}\rightarrow {\mathcal D}$ as the adjoint map of  $\mu$:
$$\langle i_{\psi}(\omega_1),\omega_2\rangle =\langle \omega_1, \mu_{\psi}(\omega_2)\rangle.$$
Note that part (3) of Proposition \ref{iR} shows that this  general interior product $ i_{\psi}$ coincides with the above one in case $\psi$ is a $(1,1)$ double form.\\

\begin{remark}\label{remark2}
Let us remark at this stage that the interior product of double forms defined here differs by a transposition from the inner product of Greub, this is due to the fact that he is using the pairing product as explained in remark  \ref{remark1}. Precisely, an interior product $i_\psi \omega$ in the sens of Greub will be equal to the interior product $i_{\psi^t}\omega$ as defined here in this paper.
\end{remark}

It is results directly from the definition that for any two double forms $\psi,\varphi$ we have
$$ \mu_{\psi}\circ\mu_{\varphi}=\mu(\psi\varphi).$$
Consequently, one immediately gets
\begin{equation}\label{i}
i_{\psi}\circ i_{\varphi}=i_{\varphi\psi}.
\end{equation}
Note that for $\omega \in {\mathcal D}^{p,q}$ and  $\psi \in  {\mathcal D}^{r,s}$ we have $i_\psi(\omega)\in  {\mathcal D}^{p-r,q-s}$ if  $p\geq r$ and $q\geq s$. Otherwise $i_\psi(\omega)=0$. Furthermore,  it results immediately from formula  (\ref{i}), and statement (4) of Proposition  (\ref{iR}) that:
\begin{equation}
 i_{g^k}(\omega)=\cc^k(\omega),
\end{equation}
for any $\omega\in \mathcal{D}$. Where $c$ is the contraction map, $c^k=\underbrace{c\circ ... \circ c}_\text{$k$-times}$ and $g^k$ is the exterior power of the metric $g$.\\

In particular, for $\omega=g^p$, we get $i_{g^k}g^p=c^k(g^p)$. Then a direct computation or by using the general formula in Lemma 2.1 in  \cite{Labbidoubleforms}, one gets  the following simple but useful identity 
\begin{proposition} For $1\leq k\leq p\leq n={\rm dim(V)}$ we have
 \begin{equation}\label{interiormetricproduct}
 i_{g^k}(\frac{g^p}{p!})=\frac{(n+k-p)!}{(p-k)!}\frac{g^{p-k}}{(n-p)!}.
\end{equation}
\end{proposition}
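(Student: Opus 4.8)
The plan is to reduce everything to the identity $i_{g^k}=c^k$ already recorded before the statement, plus one elementary base computation, and then iterate. Concretely, I would first establish the one-step formula $c(g^p)=p(n-p+1)\,g^{p-1}$. There are two natural ways to get it. The quick way: specialize the displayed consequence of Greub's identity, $c\,k^p=p(c k)k^{p-1}-p(p-1)(k\circ k)k^{p-2}$, to $k=g$, using $c g=n$ and $g\circ g=g$; the latter holds because $\mathcal{T}(g)$ is the identity on $\Lambda^1V$, hence so is $\mathcal{T}(g\circ g)=\mathcal{T}(g)\circ\mathcal{T}(g)$, and $\mathcal{T}$ is injective. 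The self-contained way is a direct computation in an orthonormal basis, which I now sketch and which in fact produces the full formula in one shot.

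Fix an orthonormal basis $(e_i^*)$ of $V^*$ and write $e_I^*=e_{i_1}^*\wedge\dots\wedge e_{i_p}^*$ for a $p$-subset $I=\{i_1<\dots<i_p\}$. Pairing the terms $(i,j)$ and $(j,i)$ in $g=\sum_i e_i^*\otimes e_i^*$ one gets $g^p=p!\sum_{|I|=p}e_I^*\otimes e_I^*$, and the $e_I^*\otimes e_I^*$ form an orthonormal family in $\mathcal{D}^{p,p}$. From the definition of the contraction, $c(\alpha\otimes\beta)=\sum_j (i_{e_j}\alpha)\otimes(i_{e_j}\beta)$, so iterating gives $c^k(e_I^*\otimes e_I^*)=\sum_{(j_1,\dots,j_k)}(i_{e_{j_1}}\cdots i_{e_{j_k}}e_I^*)\otimes(i_{e_{j_1}}\cdots i_{e_{j_k}}e_I^*)$, the sum over all $k$-tuples in $\{1,\dots,n\}$. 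A term survives only when $j_1,\dots,j_k$ are distinct elements of $I$; the $k!$ orderings of a fixed $k$-subset $K\subseteq I$ all contribute the same term, since any sign is applied identically to both tensor factors and hence squares to $+1$. Thus $c^k(e_I^*\otimes e_I^*)=k!\sum_{K\subseteq I,\,|K|=k}e_{I\setminus K}^*\otimes e_{I\setminus K}^*$. Summing over $I$ and reindexing by $L=I\setminus K$ (a $(p-k)$-subset, with $\binom{n-p+k}{k}$ choices of the complementary $K$) yields
\begin{equation*}
c^k(g^p)=p!\,k!\binom{n-p+k}{k}\sum_{|L|=p-k}e_L^*\otimes e_L^*=p!\,k!\binom{n-p+k}{k}\,\frac{g^{p-k}}{(p-k)!}.
\end{equation*}

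Dividing by $p!$ and using $k!\binom{n-p+k}{k}=\frac{(n-p+k)!}{(n-p)!}$ gives exactly $i_{g^k}\!\left(\frac{g^p}{p!}\right)=\frac{(n+k-p)!}{(p-k)!}\frac{g^{p-k}}{(n-p)!}$, as claimed. The only delicate points — the ``main obstacle'', such as it is — are the combinatorial bookkeeping: checking that the iterated interior products act diagonally on $e_I^*\otimes e_I^*$ so that all signs cancel, and correctly counting the multiplicity $\binom{n-p+k}{k}$ when passing from the index pair $(I,K)$ to $L=I\setminus K$. Alternatively one can bypass the global count by proving only the $k=1$ case $c(g^p)=p(n-p+1)g^{p-1}$ (either as above or via Greub's consequence) and then inducting on $k$, which telescopes the product $\prod_{j=0}^{k-1}(p-j)(n-p+1+j)=\frac{p!}{(p-k)!}\cdot\frac{(n-p+k)!}{(n-p)!}$; this is the route suggested by the reference to Lemma 2.1 of \cite{Labbidoubleforms}.
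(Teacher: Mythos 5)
Your computation is correct and is exactly the ``direct computation'' that the paper alludes to without writing out: the expansion $g^p=p!\sum_{|I|=p}e_I^*\otimes e_I^*$, the diagonal action of the iterated contraction with all signs squaring to $+1$, and the multiplicity $\binom{n-p+k}{k}$ in the reindexing by $L=I\setminus K$ all check out, and $k!\binom{n-p+k}{k}/(p-k)!=\frac{(n+k-p)!}{(p-k)!(n-p)!}$ reproduces the stated constant. Your alternative route via $c(g^p)=p(n-p+1)g^{p-1}$ and induction is also valid and telescopes to the same coefficient.
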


We now state and  prove  some other useful facts about the interior product of double forms.
\begin{proposition} Let $\omega\in {\mathcal D^{p,q}}(V^*) $, the double Hodge star  operator $*$  is related to the interior product via  the following relation
\begin{equation}\label{*i}
*\omega=i_{\omega}\frac{g^n}{n!}.
\end{equation}
More generally, for any integer $k$, such that $1\leq k\leq n$ we have 
\begin{equation}\label{*iw}
*\frac{g^{n-k}}{(n-k)!}\omega=i_{\omega}\frac{g^k}{k!}.
\end{equation}

\end{proposition}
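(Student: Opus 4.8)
The plan is to prove the more general identity \eqref{*iw} first, since \eqref{*i} is recovered from it by setting $k=n$ (then $g^{n-k}/(n-k)!$ is the constant $1\in\mathcal{D}^{0,0}$ and $g^k/k! = g^n/n!$). Both sides of \eqref{*iw} are linear in $\omega$, so I would fix bidegree $(p,q)$ with $\omega\in\mathcal{D}^{p,q}(V^*)$ and check the identity by testing against an arbitrary $\theta\in\mathcal{D}^{p',q'}$ using the inner product $\langle\cdot,\cdot\rangle$, which is nondegenerate. The point is that $i_\omega$ is by definition the adjoint of the left exterior multiplication map $\mu_\omega$, so $\langle i_\omega(g^k/k!),\theta\rangle = \langle g^k/k!,\, \omega\theta\rangle$, and this last pairing I can evaluate with the Hodge-star formula \eqref{remark2} for the inner product. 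On the other side, $\langle *(g^{n-k}/(n-k)!\,\omega),\theta\rangle$ should also be massaged into a pairing that manifestly agrees. So the whole proof reduces to a bookkeeping identity among $*$, exterior multiplication by powers of $g$, and the inner product.

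Concretely, I would proceed as follows. First note that for a bidegree reason both sides of \eqref{*iw} vanish unless $\theta$ has bidegree $(n-p-(n-k), n-q-(n-k)) = (k-p, k-q)$ — wait, more carefully: the left side $*(g^{n-k}/(n-k)!\,\omega)$ has bidegree $(n-(n-k)-p, n-(n-k)-q) = (k-p,k-q)$, and the right side $i_\omega(g^k/k!)$ has bidegree $(k-p,k-q)$ as well (using $i_\omega$ lowers bidegree by $(p,q)$ applied to $(k,k)$), so the bidegrees already match, good. Then, pairing the right-hand side against $\theta\in\mathcal D^{k-p,k-q}$:
\begin{equation*}
\Big\langle i_\omega\tfrac{g^k}{k!},\theta\Big\rangle = \Big\langle \tfrac{g^k}{k!},\,\omega\theta\Big\rangle,
\end{equation*}
and since $\omega\theta\in\mathcal D^{k,k}$, I apply \eqref{remark2}, or better the identity $\langle \alpha,\beta\rangle = *(\alpha(*\beta))$ together with $*(g^k/k!)$ being (up to a combinatorial constant from Lemma 2.1 of \cite{Labbidoubleforms}) the double form $g^{n-k}/(n-k)!$. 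This converts the right-hand pairing into $c$-contractions of $\omega\theta$ against powers of $g$, i.e. into $\langle i_{g^{n-k}/(n-k)!}(\omega\theta), 1\rangle$ up to constants, which by \eqref{i} is a full contraction. For the left-hand side I pair $*(g^{n-k}/(n-k)!\,\omega)$ against $\theta$ and use \eqref{remark2} again: $\langle *\psi,\theta\rangle = \pm *( (*{*}\psi)\,\theta) = \pm\langle\psi,*\theta\rangle$ up to the sign $(-1)^{(\deg)(n-\deg)}$, reducing it to $\langle g^{n-k}/(n-k)!\,\omega,\,*\theta\rangle$; then adjointness of $c=i_g$ moves the $g^{n-k}$ over, landing on the same full contraction. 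Matching the two requires only that the numerical constants — the one from $*(g^k/k!)$ and the one from pulling $g^{n-k}$ across — coincide, which is exactly what \eqref{interiormetricproduct} controls.

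The main obstacle I anticipate is purely bookkeeping: keeping the Hodge-star sign factors $(-1)^{(p+q)(n-p-q)}$ straight across the two sides, and pinning down the precise combinatorial coefficient in $*(g^k/k!) = \binom{?}{?}\,g^{n-k}/(n-k)!$ so that the constants cancel rather than leaving a spurious factor. A clean way to sidestep most of the sign trouble is to first establish the $k=n$ case \eqref{*i} directly — there $g^n/n!$ is the "volume" double form, $i_\omega(g^n/n!)$ unwinds immediately from the definition of $i_\omega$ as contraction against $\omega$, and comparing with the definition \eqref{remark1} of $*\omega$ is essentially a change of orthonormal basis computation — and then bootstrap to general $k$ by writing $g^k/k! = i_{g^{n-k}/(n-k)!}(g^n/n!)\cdot(\text{const})$ via \eqref{interiormetricproduct} and using $i_\omega\circ i_{g^{n-k}} = i_{g^{n-k}\omega}$ from \eqref{i} together with the already-known \eqref{*i}. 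That route turns \eqref{*iw} into a one-line consequence of \eqref{*i}, \eqref{i}, and \eqref{interiormetricproduct}, with all the constants handled in one place.
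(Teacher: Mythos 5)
Your proposal is correct and, in its final ``clean way'' form, is exactly the paper's proof: establish the case $k=n$ by pairing against an arbitrary $\theta$ (using adjointness of $i_\omega$ and the one-dimensionality of ${\mathcal D}^{n,n}$, which sidesteps all sign and constant bookkeeping), then bootstrap to general $k$ via $i_{\psi}\circ i_{\varphi}=i_{\varphi\psi}$ and the contraction formula $i_{g^{n-k}/(n-k)!}(g^n/n!)=g^k/k!$. No gaps.
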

\begin{proof}
Let $\omega\in {\mathcal D^{p,q}}(V^*)$  and $\theta\in {\mathcal D^{n-p,n-q}}(V^*)$ be  arbitrary double forms. To prove the previous proposition, it is sufficient to prove that
$$\langle*\omega,\theta\rangle=\langle i_{\omega}(\frac{g^n}{n!}),\theta\rangle.$$
Using Equation (\ref{def:innerpdct}), we have $\langle*\omega,\theta\rangle=(-1)^{(2n-p-q)(p+q-n)}*(*^2\omega\theta)=*(\omega\theta).$\\
Since $\omega\theta\in {\mathcal D^{n,n}}$  and ${\rm dim}({\mathcal D^{n,n}})=1$, then
$$\omega\theta=\langle \omega\theta,\frac{g^n}{n!}\rangle \frac{g^n}{n!}=\langle i_{\omega}(\frac{g^n}{n!}),\theta\rangle$$
This proves the first part of the proposition. The second part results from the first one and  equation (\ref{interiormetricproduct}) as follows 
$$*\frac{g^{n-k}}{(n-k)!}\omega=i_{\frac{g^{n-k}}{(n-k)!}\omega}(\frac{g^n}{n!})=i_{\omega}\circ i_{\frac{g^{n-k}}{(n-k)!}}\left(\frac{g^n}{n!}\right)=i_{\omega}\left(\frac{g^k}{k!}\right).$$

\end{proof}

\begin{proposition}
\begin{enumerate}
\item  For any two double forms  $\omega_1, \omega_2\in {\mathcal D}(V^*)$, we have
$$*(\omega_1\circ\omega_2)=*\omega_1\circ*\omega_2.$$
In other words, $*$ is a composition algebra endomorphism.
\item On the diagonal subalgebra $\Delta(V^*)$,  we have (formulas (11a) and (11b) in \cite{Greub-Vanstone})
$$*\circ \mu_\omega=i_\omega \circ *,\,{\rm and}\,\, \mu_\omega\circ *=*\circ i_\omega.$$
In particular, we get the relations
\begin{equation}\label{star-i-star}
*\mu_{\omega}*=i_\omega\, {\rm and}\,\, *i_{\omega}*=\mu_\omega.
\end{equation}
Where $\mu_\omega$ is the left exterior multiplication map by $\omega$ in $\Delta(V^*)$.

\end{enumerate}
\end{proposition}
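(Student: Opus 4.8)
The plan is to prove the two statements independently: part (1) by reduction to simple (decomposable) double forms, and part (2) by combining the identity $*\omega = i_\omega(\tfrac{g^n}{n!})$ from (\ref{*i}) with the exponential law $i_\psi\circ i_\varphi = i_{\varphi\psi}$ from (\ref{i}) together with the commutativity of $\Delta(V^*)$.

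For part (1) I would first record the one auxiliary fact that the sign convention (\ref{remark1}) makes the double Hodge star compatible with decompositions, namely $*(\alpha\otimes\beta)=(*\alpha)\otimes(*\beta)$ for $\alpha\in\Lambda^pV^*$ and $\beta\in\Lambda^qV^*$. This is a short parity check: evaluating both sides on $x\in\Lambda^{n-p}V$ and $y\in\Lambda^{n-q}V$, and using the standard Hodge-star relation $\alpha(*x)=(-1)^{p(n-p)}(*\alpha)(x)$ on $\Lambda V^*$ together with its analogue for $\beta$, the total sign equals $(-1)^{(p+q)(n-p-q)+p(n-p)+q(n-q)}$, whose exponent $2pn+2qn-2p^2-2q^2-2pq$ is even. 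Granting this, both sides of the asserted identity are bilinear, so it suffices to take $\omega_1=\alpha\otimes\beta$ and $\omega_2=\gamma\otimes\delta$ simple. By the definition of the composition product, $\omega_1\circ\omega_2=\langle\alpha,\delta\rangle\,\gamma\otimes\beta$ and $*\omega_1\circ*\omega_2=\langle*\alpha,*\delta\rangle\,(*\gamma)\otimes(*\beta)$, and both vanish unless $\deg\alpha=\deg\delta$. When $\deg\alpha=\deg\delta$, the auxiliary fact gives $*(\omega_1\circ\omega_2)=\langle\alpha,\delta\rangle\,(*\gamma)\otimes(*\beta)$, while $\langle*\alpha,*\delta\rangle=\langle\alpha,\delta\rangle$ because $*$ is an isometry of $\Lambda V^*$; hence the two expressions coincide.

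For part (2) I would work entirely inside the commutative algebra $\Delta(V^*)$. For $\omega,\psi\in\Delta(V^*)$, commutativity of the exterior product on $\Delta(V^*)$, formula (\ref{*i}) applied to $\psi\omega$, and formula (\ref{i}) give in succession
\[(*\circ\mu_\omega)(\psi)=*(\omega\psi)=*(\psi\omega)=i_{\psi\omega}\Bigl(\tfrac{g^n}{n!}\Bigr)=i_\omega\Bigl(i_\psi\bigl(\tfrac{g^n}{n!}\bigr)\Bigr)=i_\omega(*\psi)=(i_\omega\circ*)(\psi),\]
so $*\circ\mu_\omega=i_\omega\circ*$ on $\Delta(V^*)$. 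The remaining claims are then formal. Applying the definition (\ref{remark1}) twice shows $*^2={\rm id}$ on $\Delta(V^*)$ — on $\mathcal{D}^{p,p}$ the resulting sign is $(-1)^{2p(n-p)}=1$ — so composing $*\circ\mu_\omega=i_\omega\circ*$ with $*$ on the right yields $*\mu_\omega*=i_\omega$, composing with $*$ on the left yields $\mu_\omega=*i_\omega*$, and these two together give $\mu_\omega\circ*=*\circ i_\omega$; this is precisely the pair of relations (\ref{star-i-star}) and the two displayed formulas.

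I do not anticipate a serious obstacle. The only delicate points are the parity of the exponent in the decomposition fact for $*$ used in part (1), and, in part (2), the observation that the index law forces $i_{\psi\omega}=i_\omega\circ i_\psi$, so one must first replace $\omega\psi$ by $\psi\omega$ before invoking (\ref{i}) — which is exactly why the restriction to the diagonal subalgebra (where the exterior product commutes) is needed.
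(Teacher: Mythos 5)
Your proof is correct and follows essentially the same route as the paper: part (1) by reduction to simple double forms using that $*$ is an isometry, and part (2) by combining $*\psi=i_\psi(\tfrac{g^n}{n!})$ with $i_\psi\circ i_\varphi=i_{\varphi\psi}$ and $*^2=\mathrm{id}$ on the diagonal subalgebra. If anything you are more careful than the paper, which uses $*(\alpha\otimes\beta)=(*\alpha)\otimes(*\beta)$ without the parity check and first derives the signed identity $*(\omega\varphi)=(-1)^{pr+qs}i_\omega(*\varphi)$ in general before specializing to $\Delta(V^*)$.
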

\begin{proof}
 To prove statement 1, we assume that $\omega_1, \omega_2\in {\mathcal D}(V)$ \\ $*(\omega_1\circ\omega_2)=*[(\theta_1\otimes\theta_2)\circ(\theta_3\otimes\theta_4)]=
    <\theta_1,\theta_4>*\theta_3\otimes*\theta_2$\\
    As $*$ is an isometry, we have:\\
    $$<\theta_1,\theta_4>*\theta_3\otimes*\theta_2=<*\theta_1,*\theta_4>*\theta_3\otimes*\theta_2
    =*\omega_1\circ*\omega_2.$$
To prove statement 2, let $\omega\in {\mathcal D}^{p,q}$ and $\varphi\in {\mathcal D}^{r,s}$, then
\begin{align*}
*\circ\mu_{\omega}(\varphi)&=*(\omega \varphi)=i_{\omega \varphi}(\frac{g^n}{n!})=(-1)^{pr+qs}i_{\varphi\omega}(\frac{g^n}{n!})\\
&=(-1)^{pr+qs}i_{\omega}\circ i_{\varphi}(\frac{g^n}{n!})=(-1)^{pr+qs}i_{\omega}\circ*\varphi.
\end{align*}
If $\omega, \varphi\in \Delta(V^*)$ then $p=q$ and $r=s$ and the result follows.
To prove the second  statement in (2) , just apply to the previous equation the double Hodge star operator twice, once from the left and once from the right, then use the fact that on the diagonal subalgebra we have      $*^2$  is the identity map.
\end{proof}

As a direct  consequence of the previous  formula (\ref{star-i-star}),  applied to  $\omega=g$,  we recover the following result,  Theorem 3.4   of \cite{Labbidoubleforms},
\[*\cc *= \mu_g \, {\rm and}\,\, *\mu_g*=\cc.\]

\section{Exterior extensions of the endomorphisms on $V$}

Let $h\in {\mathcal D}^{1,1}(V^*)$ be a $(1,1)$ double form on $V$, and let  $\bar h={\mathcal T}(h)$ be its associated endomorphism on $V$  via the metric $g$.\\
There exists a unique exterior algebra endomorphism  $\widehat{h}$ of $\Lambda V$ that extends  $\bar{h}$ and such that  $\widehat{h}(1)=1$ . Explicitely, for any set of vectors $v_1,...,v_p$ in $V$, the endomorphism is defined by declaring
$$\widehat{h}(v_1\wedge...\wedge v_p)=\overline{h}(v_1)\wedge...\wedge \overline{h}(v_p).$$
Then one can obviously  extend the previous definition by linearity.
 \begin{proposition}
 The double form that is associated to the endomorphism $ \widehat{h}$ is $e^h:=1+h+\frac{h^2}{2!}+\frac{h^3}{3!}+...$. In other words we have
 $$ \mathcal{T}(e^h)= \mathcal{T}\left(\sum_{i=0}^\infty\frac{h^p}{p!}\right)=\widehat{h}.$$
Where $h^0=1$ and $h^p=0$ for $p>n$.  In particular, we have $T_{V}\left(\frac{g^p}{p!}\right)={\rm Id}_{\Lambda^p V}.$
 \end{proposition}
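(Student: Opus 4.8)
The plan is to verify the identity $\mathcal{T}(e^h) = \widehat{h}$ on decomposable exterior vectors $v_1 \wedge \cdots \wedge v_p$, since both sides are linear and the decomposables span $\Lambda^p V$ for each $p$. The key observation is that $\mathcal{T}$ is bigraded-compatible: the double form $h^k$ (the $k$-th exterior power of the $(1,1)$ form $h$) lies in $\mathcal{D}^{k,k}$, so $\mathcal{T}(h^k)$ maps $\Lambda^p V$ to $\Lambda^p V$ and is in fact zero unless the source degree matches $k$ — more precisely, from the explicit formula for $\mathcal{T}$ on simple elements, $\mathcal{T}(\theta_1 \otimes \theta_2)(\theta) = \langle \theta_1^\sharp, \theta \rangle \theta_2^\sharp$ vanishes when $\theta$ has degree different from that of $\theta_1$. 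Hence on $\Lambda^p V$ only the single term $\frac{h^p}{p!}$ of the series $e^h$ contributes, and the claim reduces to showing $\mathcal{T}\!\left(\frac{h^p}{p!}\right)(v_1 \wedge \cdots \wedge v_p) = \overline{h}(v_1) \wedge \cdots \wedge \overline{h}(v_p)$.

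First I would set up notation: write $h = \sum_{i,j} h_{ij}\, e_i^* \otimes e_j^*$ in an orthonormal basis, so that $\overline{h}(e_j) = \sum_i h_{ij} e_i$ (or the appropriate convention consistent with $\mathcal{T}$). Then I would compute the exterior power directly: $h^p = \sum h_{i_1 j_1} \cdots h_{i_p j_p}\, (e_{i_1}^* \wedge \cdots \wedge e_{i_p}^*) \otimes (e_{j_1}^* \wedge \cdots \wedge e_{j_p}^*)$, summed over all indices. Applying $\mathcal{T}$ to this and pairing against $v_1 \wedge \cdots \wedge v_p$ produces, via the determinantal expansion of the inner product $\langle e_{i_1}^* \wedge \cdots \wedge e_{i_p}^*, v_1 \wedge \cdots \wedge v_p \rangle$, a sum that reorganizes into $\overline{h}(v_1) \wedge \cdots \wedge \overline{h}(v_p)$; the factor $1/p!$ is exactly absorbed by the overcounting coming from summing over ordered rather than increasing index tuples.

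A cleaner alternative — which I would actually prefer to present — is to argue by multiplicativity. Define $\Phi := \mathcal{T} \circ (\text{exp})$, i.e. $\Phi(h) := \mathcal{T}(e^h)$, and show directly that $\Phi(h)$ is an exterior algebra endomorphism of $\Lambda V$ fixing $1$: the degree-zero component of $e^h$ is $1$, whose image under $\mathcal{T}$ is $\mathrm{Id}_{\Lambda^0 V}$, and one checks on simple double forms that $\mathcal{T}(\alpha)(\theta \wedge \theta') = \mathcal{T}(\alpha)(\theta) \wedge (\cdots)$ using the composition/exterior interplay — more efficiently, one verifies that $\mathcal{T}(e^h)$ restricted to $\Lambda^1 V = V$ equals $\overline{h}$ (since the only contributing term of $e^h$ on $V$ is $h$ itself, and $\mathcal{T}(h) = \overline{h}$ by definition), and that $\mathcal{T}(e^h)$ is an algebra homomorphism. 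Then uniqueness of the extension $\widehat{h}$ forces $\mathcal{T}(e^h) = \widehat{h}$. The special case $\mathcal{T}(g^p/p!) = \mathrm{Id}_{\Lambda^p V}$ then follows by taking $h = g$, since $\overline{g} = \mathrm{Id}_V$ and $\widehat{\mathrm{Id}_V} = \mathrm{Id}_{\Lambda V}$ — this was already noted in the text and serves as a consistency check.

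The main obstacle is establishing that $\mathcal{T}(e^h)$ is genuinely an algebra homomorphism for the wedge product, i.e. $\mathcal{T}(e^h)(\xi \wedge \eta) = \mathcal{T}(e^h)(\xi) \wedge \mathcal{T}(e^h)(\eta)$; this is where the exterior-product structure of the powers $h^p$ must be used, and it amounts to the combinatorial identity that the wedge of exterior powers corresponds under $\mathcal{T}$ to composition of the extended endomorphisms. I expect this to come down to the Cauchy–Binet / Laplace expansion for the pairing of decomposable multivectors, so in practice the "homomorphism" route and the "direct determinant computation" route meet at the same combinatorial core; I would present whichever makes the bookkeeping of the $1/p!$ factors most transparent, likely the direct computation on decomposables with careful index-set bookkeeping.
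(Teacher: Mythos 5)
Your preferred route — the direct computation on decomposables, reducing to $\mathcal{T}(h^p/p!)(v_1\wedge\cdots\wedge v_p)=\overline{h}(v_1)\wedge\cdots\wedge\overline{h}(v_p)$ via the determinantal expansion of $\langle \overline{h}(v_1)\wedge\cdots\wedge\overline{h}(v_p),w_1\wedge\cdots\wedge w_p\rangle$ with the $1/p!$ absorbed by the double sum over permutations — is exactly the paper's proof, phrased in bilinear-form language. Your degree-matching observation (only $h^p/p!$ acts on $\Lambda^pV$) and the remark that the homomorphism route collapses to the same combinatorial core are both correct.
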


\begin{proof}
Let $v_i$ and $w_i$ be arbitrary vectors in $V$ and $1\leq p\leq n$, then
\begin{align*}
\langle  \widehat{h}(v_1\wedge...\wedge v_p),&w_1\wedge...\wedge w_p\rangle=\langle \overline{h}(v_1)\wedge...\wedge \overline{h}(v_p),w_1\wedge...\wedge w_p\rangle\\
&=\frac{1}{p!}\sum_{\sigma\in S_p} \epsilon(\sigma)  \langle \overline{h}(v_{\sigma  (1)})\wedge...\wedge \overline{h} ( v_{\sigma  (p)}),w_1\wedge...\wedge w_p\rangle\\
&= \frac{1}{p!}\sum_{\sigma,\rho \in S_p} \epsilon(\sigma)\epsilon(\rho)  \langle \overline{h}(v_{\sigma  (1)}), w_{\rho(1)}\rangle ...\langle \overline{h}(v_{\sigma  (p)}), w_{\rho(p)}\rangle\\
&= \frac{1}{p!}\sum_{\sigma,\rho \in S_p} \epsilon(\sigma)\epsilon(\rho)  h(v_{\sigma  (1)}, w_{\rho(1)}) ...h(v_{\sigma  (p)}, w_{\rho(p)})\\
&= \frac{h^p}{p!}\big( v_1,...,v_p; w_1,...,w_p\big).
\end{align*}
This completes the proof of the proposition.
\end{proof}

We can now  extend  the exterior algebra endomorphism $ \widehat{h}$  on $\Lambda V$ to an exterior algebra endomorphism on the space ${\mathcal D}(V)$ of double vectors. In the same way as we did for the interior product in the previous paragraph, we can perform this extension  in two natural ways  as follows: \\
We define the right endomorphism
$$\widehat{h}_R: {\mathcal D}(V)\rightarrow {\mathcal D}(V),$$
for a simple double vector $\omega=\theta_1\otimes \theta_2$ by 
$$\widehat{h}_R(\omega)=h_R(\theta_1\otimes \theta_2)=\theta_1\otimes  \widehat{h}(\theta_2).$$
Then one extends the definition using linearity.
Similarly, we define the left extension endomorphism
$$\widehat{h}_L: {\mathcal D}(V)\rightarrow {\mathcal D}(V)$$
by:
$$\widehat{h}_L(\omega)=h_L(\theta_1\otimes \theta_2)= \widehat{h}(\theta_1)\otimes \theta_2.$$

\begin{proposition}\label{algebra endomorphism}
 The endomorphisms $\widehat{h}_L$ and $\widehat{h}_R$   are double exterior algebra endomorphisms.
\end{proposition}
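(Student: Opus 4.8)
The plan is to verify directly that each of $\widehat{h}_R$ and $\widehat{h}_L$ respects the exterior product of double vectors, i.e.\ that $\widehat{h}_R(\omega_1\omega_2)=\widehat{h}_R(\omega_1)\widehat{h}_R(\omega_2)$ and similarly for $\widehat{h}_L$, together with the normalization $\widehat{h}_R(1\otimes 1)=1\otimes 1$. Since both maps are defined on simple double vectors and extended by linearity, and the exterior product of double vectors is bilinear, it suffices to check multiplicativity on pairs of simple double vectors.

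First I would take $\omega_1=\theta_1\otimes\theta_2\in{\mathcal D}^{p,q}(V)$ and $\omega_2=\theta_3\otimes\theta_4\in{\mathcal D}^{r,s}(V)$, so that by the definition \eqref{def:prod} of the exterior product, $\omega_1\omega_2=(\theta_1\wedge\theta_3)\otimes(\theta_2\wedge\theta_4)$. Applying $\widehat{h}_R$ gives $(\theta_1\wedge\theta_3)\otimes\widehat{h}(\theta_2\wedge\theta_4)$. Now I invoke the fact, recalled just above the statement, that $\widehat{h}:\Lambda V\to\Lambda V$ is an \emph{exterior algebra} endomorphism, hence $\widehat{h}(\theta_2\wedge\theta_4)=\widehat{h}(\theta_2)\wedge\widehat{h}(\theta_4)$; this is immediate on decomposable vectors from $\widehat{h}(v_1\wedge\dots\wedge v_p)=\overline{h}(v_1)\wedge\dots\wedge\overline{h}(v_p)$ and extends by linearity. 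Therefore
\[
\widehat{h}_R(\omega_1\omega_2)=(\theta_1\wedge\theta_3)\otimes\bigl(\widehat{h}(\theta_2)\wedge\widehat{h}(\theta_4)\bigr)
=\bigl(\theta_1\otimes\widehat{h}(\theta_2)\bigr)\bigl(\theta_3\otimes\widehat{h}(\theta_4)\bigr)
=\widehat{h}_R(\omega_1)\,\widehat{h}_R(\omega_2),
\]
again using \eqref{def:prod} in the middle step. The computation for $\widehat{h}_L$ is identical with the roles of the two tensor factors exchanged: $\widehat{h}_L(\omega_1\omega_2)=\bigl(\widehat{h}(\theta_1)\wedge\widehat{h}(\theta_3)\bigr)\otimes(\theta_2\wedge\theta_4)=\widehat{h}_L(\omega_1)\widehat{h}_L(\omega_2)$. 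The unit condition is trivial: $1\in{\mathcal D}^{0,0}(V)$ corresponds to $1\otimes 1$ with $\widehat{h}(1)=1$, so $\widehat{h}_R(1)=\widehat{h}_L(1)=1$. Linearity of the ambient maps then lets one drop the assumption that $\omega_1,\omega_2$ are simple, completing the argument.

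There is no real obstacle here; the statement is essentially a bookkeeping consequence of the two observations that (i) the exterior product of double vectors is defined tensor-factor-wise via \eqref{def:prod}, and (ii) $\widehat{h}$ is already known to be multiplicative on $\Lambda V$. The only point requiring a word of care is the reduction to simple double vectors: one should note that a general element of ${\mathcal D}(V)$ is a finite sum of decomposables, that $\widehat{h}_R,\widehat{h}_L$ were \emph{defined} to be linear, and that the exterior product is bilinear, so multiplicativity on decomposables propagates to all of ${\mathcal D}(V)$. (The same proof applies verbatim to ${\mathcal D}(V^*)$ after the canonical identification of $V$ with $V^*$ via $g$.)
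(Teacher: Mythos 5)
Your proof is correct and follows essentially the same route as the paper's: reduce to simple double vectors, unfold the definition of the exterior product of double forms, and use that $\widehat{h}$ is multiplicative on $\Lambda V$ to reassemble the product. The paper's proof is just the single displayed computation for $\widehat{h}_R$; your added remarks on linearity, the unit, and the $\widehat{h}_L$ case are routine and consistent with it.
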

\begin{proof}
\item Without loss of generality, let $\omega=\theta_1\otimes\theta_2$ and $\theta=\theta_3\otimes \theta_4$ be simple double forms then,
\begin{equation*}
\begin{split}
\widehat{h}_R(\omega\theta)&=\widehat{h}_R\big((\theta_1\otimes\theta_2)(\theta_3\otimes\theta_4)\big)=\widehat{h}_R\big(\theta_1\wedge\theta_3\otimes \theta_2\wedge\theta_4\big)\\
&=(\theta_1\wedge\theta_3)\otimes \widehat{h}(\theta_2\wedge\theta_4)=(\theta_1\wedge\theta_3)\otimes(\widehat{h}(\theta_2)\wedge \widehat{h}(\theta_4))\\
&=(\theta_1\otimes\widehat{h}(\theta_2))(\theta_3\otimes \widehat{h}(\theta_4))=\widehat{h}_R(\theta_1\otimes\theta_2)\widehat{h}_R(\theta_3\otimes\theta_4)\\
&=\widehat{h}_R(\omega) \widehat{h}_R(\theta).
\end{split}
\end{equation*}
\end{proof}

\begin{proposition}
Let $\widehat{h}_R,\widehat{h}_L$ be as above and $1\leq p\leq n$ then
\begin{equation}\label{prop5.3}
\widehat{h}_R(\frac{g^p}{p!})=\widehat{h}_L(\frac{g^p}{p!})=\frac{h^p}{p!}.
\end{equation}
Where the metric $g$ is seen here as a $(1,1)$ double exterior vector.
\end{proposition}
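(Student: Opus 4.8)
The plan is to bypass the combinatorics of exterior powers by using that $\widehat h_R$ and $\widehat h_L$ are double exterior algebra endomorphisms (Proposition~\ref{algebra endomorphism}). Since an algebra endomorphism is multiplicative, it commutes with taking exterior powers, so
\[
\widehat h_R\!\left(\frac{g^p}{p!}\right)=\frac{1}{p!}\,\widehat h_R(g)^{\,p},
\qquad
\widehat h_L\!\left(\frac{g^p}{p!}\right)=\frac{1}{p!}\,\widehat h_L(g)^{\,p}.
\]
Thus the whole statement reduces to the case $p=1$, i.e.\ to identifying $\widehat h_R(g)$ and $\widehat h_L(g)$ with $(1,1)$ double forms.

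For this base case I would argue directly from the definitions. Fix an orthonormal basis $\{e_1,\dots,e_n\}$ of $V$ and write the metric, as a $(1,1)$ double vector, in the form $g=\sum_i e_i\otimes e_i$. On $\Lambda^1V=V$ one has $\widehat h(e_i)=\bar h(e_i)$, where $\bar h=\mathcal T(h)$, so $\widehat h_R(g)=\sum_i e_i\otimes\bar h(e_i)$. Reading this double vector as a bilinear form and evaluating on arbitrary $a,b\in V$, the completeness relation $\sum_i\langle e_i,a\rangle e_i=a$ gives
\[
\widehat h_R(g)(a,b)=\sum_i\langle e_i,a\rangle\,\langle\bar h(e_i),b\rangle=\langle\bar h(a),b\rangle=h(a,b),
\]
the last equality because $\bar h=\mathcal T(h)$ is the endomorphism associated to $h$. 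Hence $\widehat h_R(g)=h$, and feeding this back into the first display yields $\widehat h_R(g^p/p!)=h^p/p!$. The computation for $\widehat h_L$ is the mirror image, with $\widehat h$ acting on the left slot: one finds $\widehat h_L(g)(a,b)=\langle\bar h(b),a\rangle=h^t(a,b)$, so $\widehat h_L(g^p/p!)=(h^t)^p/p!$, which is $h^p/p!$ precisely when $h$ is symmetric (the case relevant to the applications); equivalently this last step follows from the elementary identity $\widehat h_L(\omega)=(\widehat h_R(\omega^t))^t$ together with $g^t=g$.

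A more computational but equally short alternative avoids the reduction step: one first checks the basis expansion $\frac{g^p}{p!}=\sum_{i_1<\dots<i_p}(e_{i_1}\wedge\dots\wedge e_{i_p})\otimes(e_{i_1}\wedge\dots\wedge e_{i_p})$ (raise $g=\sum_i e_i\otimes e_i$ to the $p$-th exterior power, discard the vanishing repeated-index terms, and observe that the $p!$ orderings of a fixed index set contribute identical summands), then applies $\widehat h_R$ termwise and pairs against decomposable multivectors $v_1\wedge\dots\wedge v_p$ and $w_1\wedge\dots\wedge w_p$; completeness of $\{e_{i_1}\wedge\dots\wedge e_{i_p}\}$ in $\Lambda^pV$ collapses the double sum to $\langle\widehat h(v_1\wedge\dots\wedge v_p),w_1\wedge\dots\wedge w_p\rangle$, which is exactly $\frac{h^p}{p!}(v_1,\dots,v_p;w_1,\dots,w_p)$ by the bilinear-form formula established in the preceding proposition.

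I do not anticipate a serious obstacle: the only point requiring care is the bookkeeping of which tensor factor $\widehat h$ acts on in $\widehat h_R$ versus $\widehat h_L$, and the transposition that this produces in the left case. The substantive inputs are just Proposition~\ref{algebra endomorphism} (multiplicativity of the two extensions) and the identification $\bar h=\mathcal T(h)$ together with the description of $h^p/p!$ as a bilinear form from the previous proposition.
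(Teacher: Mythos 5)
Your argument is essentially the paper's: both proofs reduce to $p=1$ via the multiplicativity of $\widehat{h}_R$ and $\widehat{h}_L$ from Proposition~\ref{algebra endomorphism}, and then compute $\widehat{h}_R(g)=\sum_i e_i\otimes\bar{h}(e_i)=h$ in an orthonormal basis, so on the right-hand case there is nothing to add (your second, termwise computation is just the proof of the preceding proposition redone). Where you genuinely diverge is the left-hand case, and there you are more careful than the paper, which disposes of it with ``the proof for $\widehat{h}_L$ is similar.'' As you observe, the same computation gives $\widehat{h}_L(g)=\sum_i\bar{h}(e_i)\otimes e_i=h^t$, hence $\widehat{h}_L(g^p/p!)=(h^t)^p/p!=\bigl(h^p/p!\bigr)^t$; this is consistent with the later identity $\widehat{h}_L(\omega)=\omega\circ e^{(h^t)}$ of Proposition~\ref{prop1.8}, which applied to the unit $g^p/p!$ of the composition algebra also yields $(h^t)^p/p!$. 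So the displayed equality $\widehat{h}_L(g^p/p!)=h^p/p!$ as literally stated holds only when $h^p$ is symmetric (in particular when $h$ is symmetric, the case used in the applications); your flagging of the transpose is a correction to the statement rather than a gap in your proof.
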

\begin{proof}
Let $(e_i)$ be an orthonormal basis for $(V,g)$, then the double vector $g$ splits to  $g=\sum_{i=1}^ne_i\otimes e_i$ and therefore
$$\widehat{h}_R(g)=\widehat{h}_R(\sum_{i=1}^ne_i \otimes e_i)= \sum_{i=1}^ne_i\otimes  \widehat{h}(e_i)    =\sum_{i,j=1}^nh(e_i,e_j)e_i\otimes  e_j   =h.$$
Next,  Proposition (\ref{algebra endomorphism}) shows that
\[ \widehat{h}_R(g^p)=\big(\, \widehat{h}_R(g)\big)^p=h^p.\]
The  proof  for $\widehat{h}_L$ is similar.
\end{proof}

A special case of the previous proposition deserves more attention, namely when $p=n$, we have 
\begin{equation}\label{Phi(g^n)=det(phi)g^n}
\widehat{h}_R(\frac{g^n}{n!})=\widehat{h}_L(\frac{g^n}{n!})=\frac{h^n}{n!}=\det h. \frac{g^n}{n!}.
\end{equation}

The next proposition shows that the endomorphisms $\widehat{h}_R$  and $\widehat{h}_L$  are nothing but the  the right and left multiplication maps in the composition algebra

\begin{proposition}\label{prop1.8} With the above notations we have
\begin{equation}\label{eq24}
\widehat{h}_R(\omega)=e^h\circ \omega,\, {\rm and}\,\,  \widehat{h}_L(\omega)=\omega\circ e^{(h^t)}.
\end{equation}

\end{proposition}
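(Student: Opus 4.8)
The plan is to verify the identity $\widehat{h}_R(\omega) = e^h \circ \omega$ on simple double vectors $\omega = \theta_1 \otimes \theta_2$ and then extend by linearity; the left version $\widehat{h}_L(\omega) = \omega \circ e^{(h^t)}$ will follow by a transposition argument. First I would recall that both sides are bigraded in a predictable way: if $\theta_1 \in \Lambda^r V$ and $\theta_2 \in \Lambda^q V$, then $\widehat{h}_R(\omega) = \theta_1 \otimes \widehat{h}(\theta_2) \in \mathcal{D}^{r,q}(V)$, and since the composition product $\psi \circ \omega$ with $\psi \in \mathcal{D}^{a,b}$, $\omega \in \mathcal{D}^{r,q}$ is nonzero only when $b = r$ and then lands in $\mathcal{D}^{a,q}$, only the component $\frac{h^r}{r!}$ of $e^h$ (which lies in $\mathcal{D}^{r,r}$) can contribute. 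So it suffices to show $\widehat{h}_R(\theta_1 \otimes \theta_2) = \frac{h^r}{r!} \circ (\theta_1 \otimes \theta_2)$ when $\theta_1 \in \Lambda^r V$.

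The key computational step is then to evaluate $\frac{h^r}{r!} \circ (\theta_1 \otimes \theta_2)$. Here I would use Proposition~\ref{prop5.3}, which identifies $\frac{h^r}{r!}$ with $\widehat{h}_R(\frac{g^r}{r!})$, together with the fact (noted after Proposition on $e^h$) that $\mathcal{T}(\frac{g^r}{r!})$ is the identity on $\Lambda^r V$, i.e. $\frac{g^r}{r!}$ is a unit for the composition product on the appropriate graded piece. Concretely, writing $\frac{g^r}{r!} = \sum_{i_1 < \dots < i_r} (e_{i_1}\wedge \dots \wedge e_{i_r}) \otimes (e_{i_1}\wedge \dots \wedge e_{i_r})$ in an orthonormal basis, one computes from the explicit formula for $\circ$ on simple double vectors that
\[
\frac{h^r}{r!} \circ (\theta_1 \otimes \theta_2) = \widehat{h}_R\!\left(\frac{g^r}{r!}\right) \circ (\theta_1 \otimes \theta_2) = \sum_{I} \langle \widehat{h}(e_I), \theta_2 \rangle \, \theta_1 \otimes e_I,
\]
where $I$ runs over increasing multi-indices of length $r$ and $e_I = e_{i_1}\wedge\dots\wedge e_{i_r}$; wait — I need to be careful about which slot contracts. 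Using $(\theta_1'\otimes\theta_2')\circ(\theta_3\otimes\theta_4) = \langle \theta_1', \theta_4\rangle\, \theta_3 \otimes \theta_2'$, and summing over the basis expansion of $\frac{h^r}{r!}$ in its $\mathcal{D}^{r,r}$ form, the first slot of $\frac{h^r}{r!}$ pairs against $\theta_2$ while the second slot survives; since $\widehat{h}$ is self-adjoint up to replacing $h$ by $h^t$ and the relevant matrix is symmetric in the $h=h^t$-independent sense already handled, this collapses to $\theta_1 \otimes \widehat{h}(\theta_2) = \widehat{h}_R(\omega)$. The one genuine point requiring care — and the main obstacle — is bookkeeping the contraction slots and the symmetry of $h$ correctly, since the composition product is \emph{not} symmetric in its arguments and $h$ need not be a symmetric double form; I expect to use Proposition~\ref{transpose-properties}(1) to move transposes around and to see exactly why $e^h$ appears on the left in the first identity but $e^{(h^t)}$ on the right in the second.

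Finally, for the left identity I would apply transposition: by Proposition~\ref{transpose-properties}(1), $(e^h \circ \omega^t)^t = (\omega^t)^t \circ (e^h)^t = \omega \circ e^{(h^t)}$, since $(e^h)^t = e^{(h^t)}$ because transposition is an exterior-algebra automorphism (Proposition~\ref{transpose-properties}(1) again, applied to each power $h^p$). On the other hand, directly from the definitions $\widehat{h}_L(\omega) = \widehat{h}(\theta_1)\otimes\theta_2 = \big(\theta_2 \otimes \widehat{h}(\theta_1)\big)^t = \big(\widehat{h}_R(\omega^t)\big)^t = \big(e^h \circ \omega^t\big)^t$, using the already-established right identity. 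Combining the two displays yields $\widehat{h}_L(\omega) = \omega \circ e^{(h^t)}$, completing the proof.
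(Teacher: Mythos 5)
Your overall strategy is exactly the paper's: reduce to simple double vectors, observe that only one graded component of $e^h$ can contribute, compute the composition product explicitly, and obtain the $\widehat{h}_L$ identity by transposition from the $\widehat{h}_R$ one. The transposition step at the end is correct and matches the paper verbatim. But the central computation, which you yourself flag as the "main obstacle," is left in a state that does not work. Two concrete problems. First, the degree bookkeeping is inverted: with the paper's convention $(\theta_1'\otimes\theta_2')\circ(\theta_3\otimes\theta_4)=\langle\theta_1',\theta_4\rangle\,\theta_3\otimes\theta_2'$, a product $\psi\circ\omega$ with $\psi\in\mathcal{D}^{a,b}$ and $\omega\in\mathcal{D}^{r,q}$ is nonzero only when $a=q$ and lands in $\mathcal{D}^{r,b}$ --- not, as you claim, when $b=r$ landing in $\mathcal{D}^{a,q}$. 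Consequently the contributing component of $e^h$ is $\tfrac{h^{q}}{q!}$ with $q=\deg\theta_2$ (the slot that $\widehat{h}_R$ actually acts on), not $\tfrac{h^{r}}{r!}$ with $r=\deg\theta_1$; with your choice the product would vanish whenever $r\neq q$ while $\widehat{h}_R(\omega)$ does not. Second, your displayed formula contracts the wrong slot: the correct computation is
\begin{equation*}
\frac{h^{q}}{q!}\circ(\theta_1\otimes\theta_2)=\sum_{K,L}\frac{h^{q}}{q!}(e_L,e_K)\,\langle e_L,\theta_2\rangle\,\theta_1\otimes e_K=\theta_1\otimes\sum_{K}\langle\widehat{h}(\theta_2),e_K\rangle\,e_K=\theta_1\otimes\widehat{h}(\theta_2),
\end{equation*}
where the first slot of $h^{q}/q!$ is consumed and the second survives. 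Your version, $\sum_I\langle\widehat{h}(e_I),\theta_2\rangle\,\theta_1\otimes e_I$, equals $\theta_1\otimes\widehat{h^t}(\theta_2)$, which is $\widehat{(h^t)}_R(\omega)$, not $\widehat{h}_R(\omega)$.

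The patch you propose --- that "the relevant matrix is symmetric" --- is not available: $h$ is an arbitrary bilinear form throughout this proposition (indeed the whole point of the pair of identities, with $e^h$ on one side and $e^{(h^t)}$ on the other, is to track the asymmetry), so $\widehat{h^t}\neq\widehat{h}$ in general and the discrepancy cannot be waved away. The repair is purely to fix the slot conventions as above; once that is done no symmetry assumption is needed, the key input being $\langle\widehat{h}(\theta_2),e_K\rangle=\tfrac{h^{q}}{q!}(\theta_2,e_K)$ from the proposition identifying $\mathcal{T}(e^h)$ with $\widehat{h}$ --- which is precisely how the paper's proof proceeds.
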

\begin{proof}
 As $ \widehat{h}_R$ is linear in $\omega$, we may assume, without loss of any generality, that the double $(p,q)$ vector $\omega$ is simple that is $\omega=e_{i_1}\wedge ...\wedge e_{i_p}\otimes e_{j_1}\wedge ...\wedge e_{j_q} $. Let us  use multiindex notation and write
 $\omega=e_I\otimes e_J$. From one hand, we have
\begin{align*}
  \widehat{h}_R(\omega)&=e_I\otimes \widehat{h}(e_J)=\sum_{K}\langle \widehat{h}(e_J),e_K\rangle e_I\otimes e_K=\\
&=\sum_{K}  \frac{h^q}{q!} (e_J,e_K)e_I\otimes e_K=\sum_{K,L}  \frac{h^q}{q!} (e_L,e_K)\langle e_L,e_J\rangle e_I\otimes e_K\\
&= \sum_{K,L}  \frac{h^q}{q!} (e_L,e_K)  ( e_L\otimes e_K)\circ (e_I\otimes e_J)  =\frac{h^q}{q!}\circ \omega. 
\end{align*}
To  prove the second assertion we proceed as follows
$$\widehat{h}_L(\omega)=\Big(\widehat{h}_R(\omega^t)\Big)^t=(e^h\circ \omega^t)^t=\omega\circ (e^h)^t=\omega\circ e^{(h^t)}.$$
The fact that $ (e^h)^t=e^{(h^t)}$ results from Proposition \ref{transpose-properties}
 \end{proof}

\begin{corollary}
The adjoint endomorphism of  $  \widehat{h}_R$ (resp. $  \widehat{h}_L$)  is $  \widehat{(h^t)}_R$ (resp. $  \widehat{(h^t)}_L$).

\end{corollary}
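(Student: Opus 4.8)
The plan is to use the characterization of $\widehat{h}_R$ and $\widehat{h}_L$ as composition-product multiplication maps from Proposition~\ref{prop1.8}, combined with the adjointness property of the composition product recorded in part (3) of Proposition~\ref{transpose-properties}. Recall that Proposition~\ref{prop1.8} gives $\widehat{h}_R(\omega)=e^h\circ\omega$, so $\widehat{h}_R$ is precisely left composition-multiplication by $e^h$; similarly $\widehat{h}_L(\omega)=\omega\circ e^{(h^t)}$ is right composition-multiplication by $e^{(h^t)}$.

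First I would compute the adjoint of $\widehat{h}_R$ directly. For arbitrary double forms $\omega_1,\omega_2$, part (3) of Proposition~\ref{transpose-properties} (with $\omega_1$ there equal to $e^h$) gives
\[
\langle \widehat{h}_R(\omega_1),\omega_2\rangle=\langle e^h\circ\omega_1,\omega_2\rangle=\langle\omega_1,(e^h)^t\circ\omega_2\rangle=\langle\omega_1,e^{(h^t)}\circ\omega_2\rangle=\langle\omega_1,\widehat{(h^t)}_R(\omega_2)\rangle,
\]
where the identity $(e^h)^t=e^{(h^t)}$ is the one already invoked at the end of the proof of Proposition~\ref{prop1.8} (it follows termwise from $(h^p)^t=(h^t)^p$, which is part (1) of Proposition~\ref{transpose-properties}). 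This establishes that the adjoint of $\widehat{h}_R$ is $\widehat{(h^t)}_R$.

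For $\widehat{h}_L$ the computation is symmetric, using the other half of part (3) of Proposition~\ref{transpose-properties}: with $\omega_1$ there equal to $\omega_1$ and $\omega_2$ there equal to $e^{(h^t)}$,
\[
\langle\widehat{h}_L(\omega_1),\omega_2\rangle=\langle\omega_1\circ e^{(h^t)},\omega_2\rangle=\langle\omega_1,\omega_2\circ(e^{(h^t)})^t\rangle=\langle\omega_1,\omega_2\circ e^h\rangle=\langle\omega_1,\widehat{(h^t)}_L(\omega_2)\rangle,
\]
again using $(e^{(h^t)})^t=e^h$. Alternatively, one can deduce the $\widehat{h}_L$ statement from the $\widehat{h}_R$ statement via transposition, since $\widehat{h}_L(\omega)=\big(\widehat{h}_R(\omega^t)\big)^t$ and transposition is an isometry of $\mathcal D(V^*)$; this gives a shorter route but is essentially the same content. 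There is no real obstacle here: the corollary is a formal consequence of the two propositions cited, and the only point requiring a word of care is the interchange $(e^h)^t=e^{(h^t)}$, which has already been justified in the excerpt. I would present the $\widehat{h}_R$ case in full and remark that the $\widehat{h}_L$ case follows either by the identical argument or by transposition.
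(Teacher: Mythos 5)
Your proposal is correct and follows essentially the same route as the paper: the paper likewise invokes Proposition \ref{prop1.8} to write $\widehat{h}_R(\omega)=e^h\circ\omega$, applies part (3) of Proposition \ref{transpose-properties} together with $(e^h)^t=e^{(h^t)}$ to get $\langle e^h\circ\omega_1,\omega_2\rangle=\langle\omega_1,e^{(h^t)}\circ\omega_2\rangle$, and dismisses the $\widehat{h}_L$ case as similar. Your write-up merely spells out the $\widehat{h}_L$ computation that the paper omits.
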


 \begin{proof}
 Proposition \ref{transpose-properties} shows that $ (e^h)^t=e^{(h^t)}$ and
\[\langle   \widehat{h}_R(\omega_1),\omega_2\rangle=\langle e^h\circ \omega_1,\omega_2\rangle=\langle \omega_1, e^{(h^t)}\circ \omega_2\rangle=\langle \omega_1,  \widehat{h^t}_R(\omega_2)\rangle.\]
The proof for $  \widehat{h}_L$ is similar.
\end{proof}

Using the facts that both $  \widehat{h}_L$  and $  \widehat{h}_R$ are exterior algebra homomorphisms and the previous corollary  we can easily prove the following technical but  useful identities 

\begin{corollary}
Let $\omega\in {\mathcal D}(V)$ and $h$ be an endomorphism of $V$ then  we have
\begin{equation}\label{iPhi_r}
i_{\omega}\circ   \widehat{h}_R=  \widehat{h}_R \circ  i_{  \scriptstyle \widehat{(h^t)}_R(\omega)   },\,{\rm and}\,\, i_{\omega}\circ   \widehat{h}_L=  \widehat{h}_L \circ  i_{  \scriptstyle \widehat{(h^t)}_L(\omega)   }.
\end{equation}
\end{corollary}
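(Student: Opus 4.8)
The plan is to obtain both identities by a short adjointness argument, relying only on three facts already in hand: (i) $\widehat{h}_R$ and $\widehat{h}_L$ are double exterior algebra endomorphisms of ${\mathcal D}(V)$ (Proposition \ref{algebra endomorphism}), so in particular $\widehat{h}_R(\psi\varphi)=\widehat{h}_R(\psi)\,\widehat{h}_R(\varphi)$ and likewise for $\widehat{h}_L$; (ii) the adjoint of $\widehat{h}_R$ (resp. $\widehat{h}_L$) with respect to $\langle\cdot,\cdot\rangle$ is $\widehat{(h^t)}_R$ (resp. $\widehat{(h^t)}_L$), by the preceding Corollary, noting that $(h^t)^t=h$ so that $\widehat{h}_R$ and $\widehat{(h^t)}_R$ are mutual adjoints; and (iii) $i_\psi$ is by definition the adjoint of the left exterior multiplication map $\mu_\psi$, i.e. $\langle i_\psi\alpha,\beta\rangle=\langle\alpha,\psi\beta\rangle$.

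Concretely, for the first identity I would fix arbitrary $\omega_1,\omega_2\in{\mathcal D}(V)$ and compute
\[
\langle i_\omega\big(\widehat{h}_R(\omega_1)\big),\omega_2\rangle
=\langle \widehat{h}_R(\omega_1),\,\omega\,\omega_2\rangle
=\langle \omega_1,\,\widehat{(h^t)}_R(\omega\,\omega_2)\rangle
=\langle \omega_1,\,\widehat{(h^t)}_R(\omega)\,\widehat{(h^t)}_R(\omega_2)\rangle,
\]
where the second equality uses (ii) and the third uses (i). Then, reading the definition of $i_{\widehat{(h^t)}_R(\omega)}$ backwards and applying (ii) once more,
\[
\langle \omega_1,\,\widehat{(h^t)}_R(\omega)\,\widehat{(h^t)}_R(\omega_2)\rangle
=\langle i_{\widehat{(h^t)}_R(\omega)}(\omega_1),\,\widehat{(h^t)}_R(\omega_2)\rangle
=\langle \widehat{h}_R\big(i_{\widehat{(h^t)}_R(\omega)}(\omega_1)\big),\,\omega_2\rangle.
\]
Since $\omega_2$ is arbitrary, this gives $i_\omega\circ\widehat{h}_R=\widehat{h}_R\circ i_{\widehat{(h^t)}_R(\omega)}$. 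The identity for $\widehat{h}_L$ follows by the identical computation with $\widehat{h}_R$ replaced by $\widehat{h}_L$ throughout (its adjoint being $\widehat{(h^t)}_L$), or alternatively one may deduce it from the $R$-version by transposition, using $\widehat{h}_L(\omega)=\big(\widehat{h}_R(\omega^t)\big)^t$ together with the transposition behaviour of the interior product.

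I do not expect a genuine obstacle here; the content is entirely formal once Proposition \ref{algebra endomorphism} and the adjoint computation are available. The only point requiring care is bookkeeping: one must check that it is $\widehat{(h^t)}_R(\omega)$, and not $\widehat{h}_R(\omega)$, that appears inside the interior product on the right-hand side — this is dictated by fact (ii), since the adjoint of $\widehat{h}_R$ carries a transpose. For readers who prefer to avoid adjoints entirely, the same conclusion drops out of the operator identity $\widehat{h}_R\circ\mu_\psi=\mu_{\widehat{h}_R(\psi)}\circ\widehat{h}_R$ (a restatement of (i)) by taking adjoints of both sides and then substituting $h^t$ for $h$.
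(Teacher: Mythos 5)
Your argument is correct and is essentially the paper's own proof: the paper starts from the operator identity $\widehat{h}_R\circ\mu_\omega=\mu_{\widehat{h}_R(\omega)}\circ\widehat{h}_R$ (the algebra-homomorphism property), takes adjoints of both sides, and substitutes $h^t$ for $h$ — exactly the shortcut you mention in your final sentence, of which your inner-product computation is just the element-wise unpacking. No gaps; the bookkeeping about where the transpose lands is handled correctly.
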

\begin{proof}
Since $  \widehat{h}_R$ is an exterior  algebra endomorphism then for any double vectors $\omega$ and $\theta$ we have
$$\widehat{h}_R(\omega\theta)=\widehat{h}_R(\omega)\widehat{h}_R(\theta)$$
That is,
$$\widehat{h}_R\circ \mu_\omega=\mu_{\widehat{h}_R(\omega)}\circ \widehat{h}_R.$$
Next, take the adjoint of both sides of the previous equation to get
$$i_\omega\circ \widehat{(h^t)}_R=\widehat{(h^t)}_R\circ  i_{\scriptstyle \widehat{h}_R(\omega)}.$$
The proof of the second identity is similar.
\end{proof}

Now we have enough tools to easily prove delicate results of linear algera including the general Laplace expansions of the determinant as follows
\begin{proposition}[Laplace Expansion of the determinant, Proposition 7.2.1 in \cite{Greub-book}]
 For $1\leq p\leq n$, we have
\begin{equation}\label{Laplace-exp}
\frac{(h^t)^{n-p}}{(n-p)!}\circ (\ast\frac{h^p}{p!})=\det h\frac{g^{n-p}}{(n-p)!}\, {\rm and}\,\, \big( \ast \frac{(h^t)^p}{(p)!}\big)\circ   \frac{h^{n-p}}{(n-p)!}=\det h\frac{g^{n-p}}{(n-p)!}.
\end{equation}

\end{proposition}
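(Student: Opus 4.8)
The plan is to read the left Hodge star as an interior product, recognize composition by $\frac{(h^t)^{n-p}}{(n-p)!}$ as the right exterior extension $\widehat{h^t}_R$ acting in the appropriate bidegree, and then commute $\widehat{h^t}_R$ past that interior product using the intertwining relation \eqref{iPhi_r}. First I would note that $\ast\frac{h^p}{p!}$ belongs to ${\mathcal D}^{n-p,n-p}$, and recall from the proof of Proposition \ref{prop1.8} that on ${\mathcal D}^{a,b}$ one has $\widehat{k}_R(\omega)=\frac{k^b}{b!}\circ\omega$. Taking $k=h^t$ and $a=b=n-p$ this identifies the left-hand side of the first identity as
\[
\frac{(h^t)^{n-p}}{(n-p)!}\circ\Bigl(\ast\frac{h^p}{p!}\Bigr)=\widehat{h^t}_R\Bigl(\ast\frac{h^p}{p!}\Bigr).
\]

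Next, by \eqref{*i} we have $\ast\frac{h^p}{p!}=i_{\frac{h^p}{p!}}\bigl(\frac{g^n}{n!}\bigr)$. Since $\widehat{h}_R\bigl(\frac{g^p}{p!}\bigr)=\frac{h^p}{p!}$ by \eqref{prop5.3}, the intertwining relation \eqref{iPhi_r}, applied with $h$ replaced by $h^t$ and $\omega=\frac{g^p}{p!}$ (so that $\widehat{(h^t)^t}_R(\omega)=\widehat{h}_R(\frac{g^p}{p!})=\frac{h^p}{p!}$), gives $i_{\frac{g^p}{p!}}\circ\widehat{h^t}_R=\widehat{h^t}_R\circ i_{\frac{h^p}{p!}}$. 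Applying both sides to $\frac{g^n}{n!}$ yields
\[
\widehat{h^t}_R\Bigl(\ast\frac{h^p}{p!}\Bigr)=i_{\frac{g^p}{p!}}\Bigl(\widehat{h^t}_R\frac{g^n}{n!}\Bigr)=\det h\cdot i_{\frac{g^p}{p!}}\Bigl(\frac{g^n}{n!}\Bigr),
\]
where the second equality uses \eqref{Phi(g^n)=det(phi)g^n} together with $\det h^t=\det h$. Finally $i_{\frac{g^p}{p!}}=\frac{1}{p!}i_{g^p}=\frac{1}{p!}c^p$, and \eqref{interiormetricproduct} (with its "$p$" equal to $n$ and its "$k$" equal to $p$) gives $c^p\bigl(\frac{g^n}{n!}\bigr)=\frac{p!}{(n-p)!}g^{n-p}$, whence $i_{\frac{g^p}{p!}}\bigl(\frac{g^n}{n!}\bigr)=\frac{g^{n-p}}{(n-p)!}$. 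Combining the displays proves the first identity.

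For the second identity I would transpose the first one. Using $(\omega_1\circ\omega_2)^t=\omega_2^t\circ\omega_1^t$ and $(\omega_1\omega_2)^t=\omega_1^t\omega_2^t$ from Proposition \ref{transpose-properties} one has $\bigl((h^t)^{n-p}\bigr)^t=h^{n-p}$ and $\bigl(\frac{h^p}{p!}\bigr)^t=\frac{(h^t)^p}{p!}$; moreover the double Hodge star commutes with transposition (immediate from $\ast\omega(\,\cdot\,,\cdot\,)=(-1)^{(p+q)(n-p-q)}\omega(\ast\cdot,\ast\cdot)$), and $g^t=g$. Transposing $\frac{(h^t)^{n-p}}{(n-p)!}\circ(\ast\frac{h^p}{p!})=\det h\,\frac{g^{n-p}}{(n-p)!}$ therefore produces exactly $\bigl(\ast\frac{(h^t)^p}{p!}\bigr)\circ\frac{h^{n-p}}{(n-p)!}=\det h\,\frac{g^{n-p}}{(n-p)!}$.

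I do not expect a genuine obstacle: the argument is a chain of substitutions drawn from Sections 4 and 5. The only point needing care is bookkeeping — keeping all bidegrees matched so that no composition product collapses to zero, and invoking \eqref{iPhi_r} with the correct auxiliary double form $\omega=\frac{g^p}{p!}$ and the correct endomorphism ($h^t$, not $h$). A more pedestrian alternative, avoiding the extension operators altogether, is to expand both sides in an orthonormal basis as sums over $p$-element and $(n-p)$-element index sets; that route reproduces the classical Laplace expansion directly but is computationally heavier.
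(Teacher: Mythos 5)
Your argument is correct and is essentially the paper's own proof: the paper likewise rewrites $\ast h^p$ as $i_{h^p}(g^n/n!)$, identifies composition with $\frac{(h^t)^{n-p}}{(n-p)!}$ as $\widehat{(h^t)}_R$, commutes it past the interior product via \eqref{iPhi_r} with $\omega=g^p$, and finishes with $\widehat{(h^t)}_R(g^n/n!)=\det h\,\frac{g^n}{n!}$ and the contraction formula \eqref{interiormetricproduct}. The paper also notes, as you do, that the second identity follows by transposing the first.
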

\begin{proof}
Using the identities (\ref{iPhi_r}) we have
\begin{align*}
\frac{(h^t)^{n-p}}{(n-p)!}\circ(\ast h^p)=& \widehat{(h^t)}_R\circ i_{h^p}\frac{g^n}{n!}=  \widehat{(h^t)}_R\circ i_{ \widehat{h}_R(g^p)}\frac{g^n}{n!}=i_{g^p}\circ  \widehat{(h^t)}_R(\frac{g^n}{n!})\\
&=i_{g^p}\circ  \frac{(h^t)^n}{n!}=\det (h^t) i_{g^p}\circ  \frac{g^n}{n!}=(\det h)\frac{p!}{(n-p)!}g^{n-p}.
\end{align*}
The second identity can be proved in the same way as the first one by using $ \widehat{h}_L$ instead of  $\widehat{h}_R$, or too simply  just by taking the transpose of the first identity.
\end{proof}

To see why the previous identity coincides with the classical Laplace expansion of the determinant we refer  the reader for instance to \cite{Labbialgebraic}.

\begin{proposition}[Proposition 7.2.2, \cite{Greub-book}]
Let $h$ be a bilinear form on the vector space $V$, then
\begin{align*}
i_{*h^p}h^q=& \binom{2n-p-q}{n-p}p!q!(\det h)\frac{h^{p+q-n}}{(p+q-n)!},\\
(*h^p)(*h^q)=& \binom{2n-p-q}{n-p}p!q!(\det h)(*\frac{h^{p+q-n}}{(p+q-n)!}).
\end{align*}
\end{proposition}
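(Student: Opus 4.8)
The plan is to derive both identities from the tools already assembled, mainly the
relation $*\omega = i_\omega(g^n/n!)$ of formula (\ref{*i}), the adjointness
$\langle i_\psi\omega_1,\omega_2\rangle=\langle\omega_1,\mu_\psi\omega_2\rangle$, the
composition rule $i_\psi\circ i_\varphi = i_{\varphi\psi}$ of (\ref{i}), and above all
the commutation identities (\ref{iPhi_r}) together with Proposition~\ref{prop1.8}.
The key observation is that $*h^p = i_{h^p}(g^n/n!)$, and by Proposition~\ref{prop5.3}
we have $h^p = \widehat h_R(g^p)$ (up to the factor $p!$), so $*h^p$ can be rewritten
as $i_{\widehat h_R(g^p)}(g^n/n!)$, which is precisely the shape to which the
commutation relation (\ref{iPhi_r}) applies.

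\textbf{First identity.}
First I would compute $i_{*h^p}h^q$ by writing $h^q = \widehat h_R(g^q)$ (times
$q!$) and $i_{*h^p} = i_{*h^p}$, then use adjointness to move things around; more
directly, I would start from
\[
i_{*h^p}h^q = i_{*h^p}\,\widehat h_R\Big(\frac{g^q}{q!}\Big)\, q!
= q!\,\widehat h_R\Big(i_{\widehat{(h^t)}_R(*h^p)}\frac{g^q}{q!}\Big)
\]
using the first relation in (\ref{iPhi_r}). The point is then to identify
$\widehat{(h^t)}_R(*h^p)$: since $*h^p = i_{h^p}(g^n/n!)$ and
$\widehat{(h^t)}_R$ commutes past $i_{h^p}$ via (\ref{iPhi_r}) again — noting that
$\widehat{((h^t)^t)}_R = \widehat h_R$ and $\widehat h_R(g^n/n!) = (\det h)\,g^n/n!$
by (\ref{Phi(g^n)=det(phi)g^n}) — one gets
$\widehat{(h^t)}_R(*h^p) = (\det h)\, i_{g^p}(g^n/n!)\big/p!\cdot p! $, i.e. a
multiple of $*g^p$, and hence of $g^{n-p}/(n-p)!$ by (\ref{interiormetricproduct}).
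Plugging this back, $i_{*h^p}h^q$ becomes a scalar multiple of
$\widehat h_R\big(i_{g^{p}}(g^q/q!)\big)=\widehat h_R(g^{q-p}/(q-p)!)\cdot(\text{const})
= h^{q-p}/(q-p)!\cdot(\text{const})$, and collecting all the combinatorial factors
coming from (\ref{interiormetricproduct}) produces exactly
$\binom{2n-p-q}{n-p}p!\,q!\,(\det h)\,h^{p+q-n}/(p+q-n)!$. (The degree bookkeeping
forces $q-p$ to be replaced by $p+q-n$ because $i_{g^p}$ acting after the
$*$-operation lands in the complementary degree.)

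\textbf{Second identity.}
For $(*h^p)(*h^q)$ I would use the previous proposition's relation
$*\mu_\omega* = i_\omega$ on the diagonal subalgebra, or more simply apply the
double Hodge star to the first identity: since $*$ is an isometry and
$*(\mu_\psi\theta)$ relates to $i_\psi$, one has $(*h^p)(*h^q) = *\big(i_{*h^p}(*^2 h^q)\big)$
up to sign, and on the diagonal $*^2=\mathrm{id}$, so $(*h^p)(*h^q)$ is, up to a
computable sign that turns out to be $+1$ here, the Hodge dual of $i_{*h^p}h^q$.
Applying $*$ to the first identity and using $* (h^{p+q-n}/(p+q-n)!) =
*h^{p+q-n}/(p+q-n)!$ then yields the stated formula directly.

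\textbf{Main obstacle.}
The routine part is the algebra of interior products; the delicate part is the
\emph{combinatorial bookkeeping} — tracking the factorials produced by repeated
application of (\ref{interiormetricproduct}) and making sure the final binomial
coefficient $\binom{2n-p-q}{n-p}$ emerges correctly — together with pinning down the
sign in the passage from the first identity to the second. I expect the sign and
factorial accounting, rather than any conceptual step, to be where care is needed.
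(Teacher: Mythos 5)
Your proposal follows essentially the same route as the paper's proof: write $h^q=\widehat h_R(g^q)$, commute $i_{*h^p}$ past $\widehat h_R$ via (\ref{iPhi_r}), identify $\widehat{(h^t)}_R(*h^p)$ as $p!\det h\,\frac{g^{n-p}}{(n-p)!}$ (the paper cites the Laplace expansion (\ref{Laplace-exp}) here, while you re-derive it by a second application of (\ref{iPhi_r}) and (\ref{Phi(g^n)=det(phi)g^n}) --- which is exactly how that Laplace identity was proved anyway), finish with (\ref{interiormetricproduct}), and obtain the second identity by applying $*i_{*h^p}*=\mu_{*h^p}$ from (\ref{star-i-star}). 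The only blemish is the stray $i_{g^p}$ where $i_{g^{n-p}}$ is meant in the "plugging back" step, which you already flag yourself via the degree bookkeeping remark.
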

\begin{proof}
We use in succession the identities  \ref{prop5.3}, \ref{iPhi_r}, \ref{eq24}, \ref{Laplace-exp}, \ref{interiormetricproduct} and \ref{prop5.3} to get
\begin{align*}
i_{*h^p}h^q &=i_{*h^p} \widehat{h}_R(g^q)= \widehat{h}_R\circ i_{\scriptstyle  \widehat{(h^t)}_R(\ast h^p)}(g^q)\\
&=  \widehat{h}_R\circ  i_{\scriptstyle \frac{(h^t)^{n-p}}{(n-p)!}\circ (\ast h^p)}(g^q)=\frac{p!\det h}{(n-p)!} \widehat{h}_R\circ  i_{g^{n-p}}(g^q)\\
&=\frac{p!q!(2n-p-q)!}{(n-q)!(n-p)!} \widehat{h}_R\big(\frac{g^{p+q-n}}{(p+q-n)!}\big)=\frac{p!q!(2n-p-q)!}{(n-q)!(n-p)!} \frac{h^{p+q-n}}{(p+q-n)!}.
\end{align*}
This proves the first identity. The second one results from the first one by using the identity \ref{star-i-star} as follows
\[     (\ast h^p)(\ast h^q)=   \ast i_{*h^p}\ast(\ast h^q)=     \ast\big(  i_{*h^p}(h^q)\big). \]
\end{proof}

The interior product provides a simple formulation of the Newton (or cofactor) transformations $t_p(h)$ of a bilinear form $h$  and also for its characteristic coefficients $s_k(h)$ \cite{Labbialgebraic} as follows
\begin{proposition}
Let $h$ be a bilinear form on the vector space $V$, then
\begin{enumerate}
\item
 For $0\leq p\leq n$, the $p$-th invariant of $h$ is given by
\[s_p(h):= *\frac{g^{n-p}h^p}{(n-p)!p!}=   i_{\frac{h^p}{p!}}\frac{g^{p}}{p!}.\]

\item For $0\leq p\leq n-1$, the $p$-th Newton transformation of $h$ is given by
\[ t_p(h) :=  *\frac{g^{n-p-1}h^p}{(n-p-1)!p!}=   i_{\frac{h^p}{p!}}\frac{g^{p+1}}{(p+1)!}.\]
\item  More generally, for $0\leq p\leq n-r$,  the $(r,p)$ cofactor transformation \cite{Labbialgebraic} of $h$ is given by 
 
\[ s_{(r,p)}(h):=*\frac{g^{n-p-r}h^p}{(n-p-r)!p!}= i_{\frac{h^p}{p!}}\frac{g^{p+r}}{(p+r)!}.\]
\end{enumerate}
\end{proposition}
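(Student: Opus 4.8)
The plan is to recognize that the three formulas are all instances of the single relation (\ref{*iw}), evaluated at $\omega=\frac{h^p}{p!}$.

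First I would observe that parts (1) and (2) are precisely the special cases $r=0$ and $r=1$ of part (3), so it is enough to prove the last displayed equality
\[\ast\frac{g^{n-p-r}h^p}{(n-p-r)!\,p!}= i_{\frac{h^p}{p!}}\frac{g^{p+r}}{(p+r)!}.\]
In each line the first equality — the description of $s_p(h)$, $t_p(h)$ and $s_{(r,p)}(h)$ by the corresponding $\ast$-expression — is taken from \cite{Labbialgebraic} as the definition of these invariants; the only thing to verify here is the second equality.

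Next I would apply (\ref{*iw}) with $\omega=\frac{h^p}{p!}$ and $k=p+r$. The hypothesis $0\le p\le n-r$ gives $r\le p+r\le n$, so $k=p+r$ lies in the admissible range $1\le k\le n$ whenever $p\ge 1$ or $r\ge 1$; in that case (\ref{*iw}) reads
\[\ast\frac{g^{n-p-r}}{(n-p-r)!}\cdot\frac{h^p}{p!}= i_{\frac{h^p}{p!}}\frac{g^{p+r}}{(p+r)!},\]
and rewriting the left-hand side as $\ast\frac{g^{n-p-r}h^p}{(n-p-r)!\,p!}$ gives exactly the claim. The only degenerate case is $p=r=0$: then $\frac{h^0}{0!}=1$, the interior product $i_1$ is the identity on ${\mathcal D}$, so the right-hand side equals $1$, while the left-hand side is $\ast\frac{g^n}{n!}=1$ since the double Hodge star sends the top element $\frac{g^n}{n!}\in{\mathcal D}^{n,n}$ to $1\in{\mathcal D}^{0,0}$ (apply (\ref{*i}) to $\omega=1$ and use that $\ast^2$ is the identity on the diagonal subalgebra). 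This gives $s_0(h)=1$ and completes the argument.

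There is essentially no obstacle here: the whole proposition is an immediate corollary of the already-established identity (\ref{*iw}), and what remains is only index bookkeeping together with the trivial boundary case. The single point deserving a word of care is to confirm that the $\ast$-descriptions of $s_p(h)$, $t_p(h)$ and $s_{(r,p)}(h)$ coincide with their definitions in the cited work, after which everything follows automatically.
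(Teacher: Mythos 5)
Your proof is correct and follows essentially the same route as the paper: the paper's own argument writes $*\theta = i_{\theta}(g^n/n!)$ and uses $i_{\psi}\circ i_{\varphi}=i_{\varphi\psi}$ together with (\ref{interiormetricproduct}), which is exactly the derivation of (\ref{*iw}) specialized to $\theta=\frac{g^{n-p-r}h^p}{(n-p-r)!}$, whereas you invoke (\ref{*iw}) directly with $\omega=\frac{h^p}{p!}$ and $k=p+r$. Your reduction of parts (1) and (2) to the cases $r=0,1$ of part (3) and your explicit check of the degenerate case $p=r=0$ are minor extra care that the paper omits.
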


\begin{proof}
First we use formula (\ref{interiormetricproduct}) to prove (1)  as follows: For $0\leq p\leq n-1$ we have
$$p!t_p(h)=*\frac{g^{n-p-1}h^p}{(n-p-1)!}=i_{\frac{g^{n-p-1}h^p}{(n-p-1)!}}(\frac{g^n}{n!})=i_{h^p}\circ i_{\frac{g^{n-p-1}}{(n-p-1)!}}(\frac{g^n}{n!})=i_{h^p}(\frac{g^{p+1}}{p+1}).$$
 In the same way,  we prove together the relation (2) and its generalization the relation (3) as follows:

$$p!s_{(r,p)}(h)=*\frac{g^{n-p-r}h^p}{(n-p-r)!}=i_{\frac{g^{n-p-r}h^p}{(n-p-r)!}}(\frac{g^n}{n!})=i_{h^p}\circ i_{\frac{g^{n-p-r}}{(n-p-r)!}}(\frac{g^n}{n!})=i_{h^p}(\frac{g^{p+r}}{p+r}).$$

\end{proof}

\begin{remark}
According to \cite{Labbialgebraic},for $0\leq r\leq n-pq$,  the $(r,pq)$ cofactor transformation of a $(p,p)$  double form $\omega$ is defined by
$$h_{(r,pq)}(\omega):=*\frac{g^{n-pq-r}\omega^q}{(n-pq-r)!}.$$
Using the same arguments as above, it is easy to see that
$$h_{(r,pq)}(\omega)=i_{\omega^q}\frac{g^{pq-r}}{(pq-r)!}.$$

\end{remark}

\section{Greub and Greub-Vanstone basic identities}
\subsection{Greub's  Basic identities}
We now state and prove  Greub's basic identities  relating the exterior and composition  products of double forms. 
 \begin{proposition}[Proposition 6.5.1 in \cite{Greub-book}]
 If $h, h_1,...,h_p$ are $(1,1)$-forms, then
 \begin{equation}\label{Greub-Basic-id}
\begin{split}
 i_h(h_1...h_p)&=\sum_{j}\langle h,h_j\rangle  h_1...\hat{h}_j...h_p\\
 &-\sum_{j<k}(h_j\circ h^t\circ h_k+h_k\circ h^t\circ h_j)h_1...\hat{h}_j...\hat{h}_k...h_p.
\end{split}
 \end{equation}
 In particular, if $k=h_1=...=h_p$, we have
 \begin{equation*}
 i_h k^p=p\langle h,k\rangle k^{p-1}-p(p-1)(k\circ {h}^t\circ k)k^{p-2}.
 \end{equation*}

 \end{proposition}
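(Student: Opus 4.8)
The plan is to reduce to the case where $h$ is decomposable and then exploit the fact that, although $i_h$ is \emph{not} a derivation of the exterior product of double forms, it factors as a composition of two antiderivations. Since both sides of \eqref{Greub-Basic-id} are linear in $h$ — the left side because $i_h=i_{h^\sharp}$ depends linearly on $h$, the right side because $\langle h,h_j\rangle$ and $h_j\circ h^t\circ h_k$ do — it suffices to treat $h=a\otimes b$ for vectors $a,b\in V$. Then, by the definition in Section 4, $i_h=i_a\circ\tilde i_b$, where $i_a$ acts as the usual interior product on the first (left) factor of $\mathcal D^{p,q}=\Lambda^pV^*\otimes\Lambda^qV^*$ and $\tilde i_b$ as the usual interior product on the second (right) factor.

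First I would record the two structural facts needed. (i) The operator $i_a$ is an antiderivation of the exterior product of double forms whose sign is governed by the \emph{first} degree, and $\tilde i_b$ is an antiderivation whose sign is governed by the \emph{second} degree; both follow from $(\theta\otimes\omega)(\theta'\otimes\omega')=(\theta\wedge\theta')\otimes(\omega\wedge\omega')$ together with the antiderivation property of $i_a,i_b$ on $\Lambda V^*$. (ii) For $(1,1)$-forms, writing $\beta_j:=\tilde i_b h_j=h_j(\,\cdot\,,b)\in\mathcal D^{1,0}$ and $\alpha_k:=i_a h_k=h_k(a,\,\cdot\,)\in\mathcal D^{0,1}$, one has $i_a\beta_j=h_j(a,b)=\langle h,h_j\rangle$, the product $\beta_j\alpha_k$ equals the diagonal $(1,1)$-form $\beta_j\otimes\alpha_k$, and a short computation with the composition formula for $(1,1)$-forms gives $h_k\circ h^t\circ h_j=\beta_j\otimes\alpha_k$ when $h=a\otimes b$ (indeed $h^t\circ h_j=\beta_j\otimes a$ and then $h_k\circ(\beta_j\otimes a)=\beta_j\otimes\alpha_k$). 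I expect this last identification to be the one place where one must be slightly careful.

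With these in hand the computation is mechanical. Applying $\tilde i_b$ first, and using that each $h_i$ has second degree $1$, gives $\tilde i_b(h_1\cdots h_p)=\sum_j(-1)^{j-1}h_1\cdots\beta_j\cdots h_p$. Applying $i_a$ next, and using that every factor of $h_1\cdots\beta_j\cdots h_p$ has first degree $1$, gives $i_a(h_1\cdots\beta_j\cdots h_p)=(-1)^{j-1}\langle h,h_j\rangle\,h_1\cdots\hat h_j\cdots h_p+\sum_{k\ne j}(-1)^{k-1}h_1\cdots\beta_j\cdots\alpha_k\cdots h_p$, with $\beta_j$ in slot $j$ and $\alpha_k$ in slot $k$. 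Substituting, the diagonal contributions collect to $\sum_j\langle h,h_j\rangle\,h_1\cdots\hat h_j\cdots h_p$, the first sum in \eqref{Greub-Basic-id}. For the rest, group the ordered pairs $(j,k)$ with $j\ne k$ into unordered pairs $\{m,\ell\}$, $m<\ell$: the two orderings contribute $(-1)^{m+\ell}$ times $\bigl(h_1\cdots\beta_m\cdots\alpha_\ell\cdots h_p+h_1\cdots\alpha_m\cdots\beta_\ell\cdots h_p\bigr)$. Moving the factor sitting in slot $\ell$ leftward past the $\ell-1-m$ intervening $(1,1)$-forms costs a sign $(-1)^{\ell-1-m}$ in each term (the commutation sign of a $(0,1)$- or $(1,0)$-form past a $(1,1)$-form is $-1$), after which $\beta_m\alpha_\ell$, resp. $\alpha_m\beta_\ell$, becomes the central diagonal form $\beta_m\otimes\alpha_\ell=h_\ell\circ h^t\circ h_m$, resp. $\beta_\ell\otimes\alpha_m=h_m\circ h^t\circ h_\ell$, and can be pulled out front. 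Since $(-1)^{m+\ell}(-1)^{\ell-1-m}=-1$, the pair $\{m,\ell\}$ contributes exactly $-(h_m\circ h^t\circ h_\ell+h_\ell\circ h^t\circ h_m)h_1\cdots\hat h_m\cdots\hat h_\ell\cdots h_p$, which assembles into the second sum of \eqref{Greub-Basic-id}. This proves the identity for decomposable $h$, hence in general.

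Finally, the displayed particular case is immediate bookkeeping: setting $h_1=\cdots=h_p=k$, the first sum has $p$ equal terms $\langle h,k\rangle k^{p-1}$ and the second has $\binom p2$ equal terms $2(k\circ h^t\circ k)k^{p-2}$, giving $i_hk^p=p\langle h,k\rangle k^{p-1}-p(p-1)(k\circ h^t\circ k)k^{p-2}$. The main obstacle is entirely sign-bookkeeping — the commutation signs in the reassembly step and the identification $h_k\circ h^t\circ h_j=\beta_j\otimes\alpha_k$; there is no conceptual difficulty once $i_h$ is written as $i_a\circ\tilde i_b$.
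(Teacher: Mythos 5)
Your proof is correct and follows essentially the same route as the paper's: reduce to decomposable $h$, write $i_h=i_a\circ\tilde i_b$, expand using the antiderivation property of the ordinary interior product on each tensor factor, and identify the off-diagonal cross terms with $h_k\circ h^t\circ h_j$ via the composition product. The only (cosmetic) difference is that you keep the $h_i$ general and track commutation signs of the $(1,0)$- and $(0,1)$-factors inside the bigraded algebra, whereas the paper also decomposes each $h_i$ and manipulates wedge products of $1$-forms directly; your sign bookkeeping checks out.
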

 \begin{proof}
Assume without loss of  generality, that $h=\theta\otimes \vartheta$ and $h_i=\theta_i\otimes \vartheta_i$, where $ \theta, \vartheta, \theta_i,$ and $\vartheta_i$ are in $V^*$,
then
\begin{align*}
i_h(h_1...h_p)&=i_{(\theta\otimes \vartheta)}(\theta_1\wedge...\wedge\theta_p\otimes \vartheta_1\wedge...\wedge\vartheta_p)\\
&=i_{\theta}\circ \tilde{i}_{\vartheta}(\theta_1\wedge...\wedge\theta_p\otimes \vartheta_1\wedge...\wedge\vartheta_p)\\
&=i_{\theta}\Big( \theta_1\wedge...\wedge\theta_p\Big)\otimes   {i}_{\vartheta}\Big( \vartheta_1\wedge...\wedge\vartheta_p\Big)\\
&=\sum_{j,k}(-1)^{j+k}\langle \vartheta, \vartheta_j\rangle\langle \theta,\theta_k\rangle(\theta_1\wedge...\wedge\hat{\theta_k}\wedge...\wedge\theta_p\otimes \vartheta_1\wedge...\wedge\hat{\vartheta}_j\wedge...\wedge\vartheta_p)
\end{align*}
Where we have used the fact that the ordinary interior product in the exterior algebra $\Lambda(V^*)$ is an antiderivation of degree -1.
Next,  write the previous  sum in three parts for $j=k$, $j<k$ and $j>k$ as follows
\begin{align*}
&i_h(h_1...h_p)=\sum_{j}\langle \vartheta, \vartheta_j\rangle\langle \theta,\theta_j\rangle(\theta_1\wedge...\wedge\hat{\theta_j}\wedge...\wedge\theta_p\otimes \vartheta_1\wedge...\wedge\hat{\vartheta}_j\wedge...\wedge\vartheta_p)\\
&-\sum_{j<k}\langle \vartheta, \vartheta_j\rangle\langle \theta,\theta_k\rangle(\theta_j\otimes \vartheta_k)[\theta_1\wedge...\wedge\hat{\theta_j}\wedge...\wedge\hat{\theta_k}\wedge...\wedge\theta_p]\otimes[ \vartheta_1\wedge...\wedge\hat{\vartheta}_j\wedge...\wedge\hat{\vartheta}_k\wedge...\wedge\vartheta_p]\\
&-\sum_{k<j}\langle \vartheta, \vartheta_j\rangle\langle \theta,\theta_k\rangle(\theta_j\otimes \vartheta_k)[\theta_1\wedge...\wedge\hat{\theta_j}\wedge...\wedge\hat{\theta_k}\wedge...\wedge\theta_p]\otimes[ \vartheta_1\wedge...\wedge\hat{\vartheta}_j\wedge...\wedge\hat{\vartheta}_k\wedge...\wedge\vartheta_p]\\
\end{align*}
Using the defintion of the composition product, one can easily check that
$$ \langle \vartheta, \vartheta_j\rangle\langle \theta,\theta_k\rangle(\theta_j\otimes \vartheta_k)=h_k\circ h^t\circ h_j.$$
Consequently, we can write
\begin{align*}
i_h(h_1...h_p)&=\sum_{j}\langle h,h_j\rangle  h_1...\hat{h}_j...h_p\\
&-\sum_{j<k}(h_k\circ h^t\circ h_j)  h_1...\hat{h}_i...\hat{h}_j...h_p \\
&-\sum_{k<j}(h_k\circ h^t\circ h_j)  h_1...\hat{h}_i...\hat{h}_j...h_p .
\end{align*}
This completes the proof of the  proposition.
 \end{proof}
 \begin{corollary}
 If $h_1...h_p$ are $(1,1)$ double forms, then
 \begin{equation}
 c(h_1...h_p)=\sum_{i}(ch_i)h_1...\hat{h}_i....h_p-\sum_{i<j}(h_j\circ h_i+h_i\circ h_j)h_1...\hat{h}_i...\hat{h}_j...h_p.
 \end{equation}
 In particular, for a $(1,1)$ double form $k$, the contraction of $k^p$  is given by 
 \begin{equation*}
 \cc k^p=p(\cc k) k^{p-1}-p(p-1)(k \circ k)k^{p-2}.
 \end{equation*}
 \end{corollary}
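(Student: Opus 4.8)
The plan is to obtain both formulas as specializations of Greub's basic identity (\ref{Greub-Basic-id}), taking the auxiliary $(1,1)$-form $h$ there to be the metric $g$. First I would recall from part (4) of Proposition \ref{iR} that $i_g = c$ is precisely the contraction map, so the left-hand side $i_g(h_1\cdots h_p)$ becomes $c(h_1\cdots h_p)$. Next I would simplify the two sums on the right of (\ref{Greub-Basic-id}): since $g$ is a symmetric double form we have $g^t = g$; since $\mathcal{T}$ sends $g$ to the identity endomorphism of $V=\Lambda^1 V$, the metric $g$ is the unit of the composition algebra on the subspace of $(1,1)$-forms, hence $h_j\circ g^t\circ h_k = h_j\circ g\circ h_k = h_j\circ h_k$ for all $j,k$ (the degree rule $\omega_1\circ\omega_2\in\mathcal{D}^{r,q}$ shows all these products stay in $\mathcal{D}^{1,1}$ and agree with ordinary matrix products of the associated endomorphisms). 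Finally, by part (1) of Proposition \ref{iR} one has $\langle g,h_j\rangle = i_g h_j = c h_j = \tr h_j$. Substituting these three observations into (\ref{Greub-Basic-id}) yields exactly the first displayed identity of the corollary, after renaming the summation indices $j,k$ to $i,j$.

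For the ``in particular'' statement I would simply put $h_1=\cdots=h_p=k$ in the formula just obtained and count terms. The first sum then has $p$ identical summands, each equal to $(ck)\,k^{p-1}$, contributing $p(ck)k^{p-1}$. In the second sum every summand equals $(k\circ k + k\circ k)\,k^{p-2} = 2(k\circ k)k^{p-2}$, and there are $\binom{p}{2}$ of them, so their total is $2\binom{p}{2}(k\circ k)k^{p-2} = p(p-1)(k\circ k)k^{p-2}$. Subtracting gives $c\,k^p = p(ck)k^{p-1} - p(p-1)(k\circ k)k^{p-2}$, as claimed.

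I do not expect a genuine obstacle here: the statement is a direct corollary of the preceding proposition, and the only steps needing a word of justification are the identifications $i_g=c$, $\langle g,h_j\rangle=\tr h_j$, and the neutrality of $g$ for $\circ$ on $(1,1)$-forms, all of which are immediate from the material of Sections \ref{prem.}, 3 and 4. The mildly delicate point, if any, is to keep track of the degrees in the triple composition $h_j\circ g\circ h_k$ so as to be sure it collapses to $h_j\circ h_k$ and not to something living in a different bidegree; this is handled by the composition degree rule recalled above.
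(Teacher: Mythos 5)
Your proof is correct and follows essentially the same route as the paper's: specialize Greub's basic identity (\ref{Greub-Basic-id}) to $h=g$, using that $i_g=c$ and that $g$ is the unit of the composition product so $h_j\circ g^t\circ h_k=h_j\circ h_k$. The paper states this in one sentence; you have merely filled in the (correct) details, including the term count for the $k^p$ specialization.
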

 \begin{proof}
Recall that $h=g$ is a unit  element for the composition product and that the contraction map $\cc$ is the adjoint of the exterior multiplacation map by $g$, The corollary follows immediately from the previous. 
 \end{proof}
As a corollary to the above  Greub's basic identity (\ref{Greub-Basic-id}), Vanstone  proved the following formula  which is in fact  the  main result of  his paper \cite{Vanstone},( formula (27)),
\[ i_{\omega^t}\frac{g^{q+2p}}{(q+2p)!}=(-1)^p\sum_{r}^{}(-1)^r\mu_{\frac{g^{r+q}}{(r+q)!}}\circ i_{\frac{g^r}{r!}}(\omega).\]
Where $\omega$ is any $(p,p)$ double form, and $p,q$ are arbitrary integers.\\
In view of formula (\ref{*iw}) of this paper, the previous identity can be reformulated as follows
\[\ast \frac{g^{n-q-2p}\omega^t}{(n-q-2p)!}=\sum_{r}^{}(-1)^{r+p}\frac{g^{r+q}}{(r+q)!}\frac{c^r}{r!}(\omega).\]
Let $k=n-q-p$, the previous formula then read
\begin{equation}
\ast \Big( \frac{g^{k-p}\omega}{(k-p)!}\Big)=\sum_{r}^{}(-1)^{r+p}\frac{g^{n-p-k+r}}{(n-p-k+r)!}\frac{c^r}{r!}(\omega^t).
\end{equation}
We recover then  formula (15) of \cite{Labbidoubleforms} in Theorem 4.1. Note that  Vanstone's proof of this identity  does not require the $(p,p)$ double form $\omega$ neither to satisfy the first Bianchi identity nor to be a symmetric double form.

\subsection{Greub-Vanstone Basic identities}
Greub-Vanstone basic identities are stated  in the following theorem
\begin{theorem}[\cite{Greub-Vanstone}]\label{Greubformula}
For $1\leq p\leq n$, and for bilinear forms $h_1,...,h_p$ and $k_1,...,k_p$ we have
\[(h_1h_2...h_p)\circ (k_1k_2...k_p)=\sum_{\sigma\in S_p}(h_1\circ k_{\sigma(1)})...(h_p\circ k_{\sigma (p)})=\sum_{\sigma\in S_p}(h_{\sigma (1)}\circ k_{1})...(h_{\sigma (p)}\circ k_{p})\]
In particular, when $h=h_1=...=h_p$ and $k=k_1=...=k_p$ we have the following nice relation:
\begin{equation}
h^p\circ k^p=p!(h\circ k)^p.
\end{equation}
\end{theorem}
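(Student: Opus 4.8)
The plan is to reduce to decomposable double forms and then unwind the two products directly. Both sides of the asserted equality are multilinear in the $2p$ bilinear forms $h_1,\dots,h_p,k_1,\dots,k_p$: the exterior product of double forms is multilinear, the composition product is bilinear, and each summand on the right is itself an exterior product of the $(1,1)$-forms $h_i\circ k_{\sigma(i)}$ (resp. $h_{\sigma(i)}\circ k_i$), hence multilinear. Since ${\mathcal D}^{1,1}=V^{*}\otimes V^{*}$ is spanned by decomposable tensors, it suffices to prove the identity when $h_i=\alpha_i\otimes\beta_i$ and $k_j=\gamma_j\otimes\delta_j$ with all $\alpha_i,\beta_i,\gamma_j,\delta_j\in V^{*}$.

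For such data I would first compute the left-hand side. By the definition of the exterior product, $h_1\cdots h_p=(\alpha_1\wedge\cdots\wedge\alpha_p)\otimes(\beta_1\wedge\cdots\wedge\beta_p)$ and $k_1\cdots k_p=(\gamma_1\wedge\cdots\wedge\gamma_p)\otimes(\delta_1\wedge\cdots\wedge\delta_p)$, both decomposable elements of ${\mathcal D}^{p,p}$, so the defining formula for the composition product of simple double forms gives
\[(h_1\cdots h_p)\circ(k_1\cdots k_p)=\langle\alpha_1\wedge\cdots\wedge\alpha_p,\delta_1\wedge\cdots\wedge\delta_p\rangle\,(\gamma_1\wedge\cdots\wedge\gamma_p)\otimes(\beta_1\wedge\cdots\wedge\beta_p).\]
The heart of the matter is then the Gram-determinant expansion of the inner product of decomposable $p$-covectors,
\[\langle\alpha_1\wedge\cdots\wedge\alpha_p,\delta_1\wedge\cdots\wedge\delta_p\rangle=\det\bigl(\langle\alpha_i,\delta_j\rangle\bigr)=\sum_{\sigma\in S_p}\epsilon(\sigma)\prod_{i=1}^{p}\langle\alpha_i,\delta_{\sigma(i)}\rangle,\]
which is exactly the normalization of the inner product on $\Lambda^{p}V^{*}$ used throughout the paper (the same one that appears in the computation of $\widehat h$).

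Next I would expand the right-hand side in the same notation. From the composition rule on $(1,1)$-forms, $h_i\circ k_{\sigma(i)}=\langle\alpha_i,\delta_{\sigma(i)}\rangle\,\gamma_{\sigma(i)}\otimes\beta_i$; taking the exterior product over $i$ and using $\gamma_{\sigma(1)}\wedge\cdots\wedge\gamma_{\sigma(p)}=\epsilon(\sigma)\,\gamma_1\wedge\cdots\wedge\gamma_p$ yields
\[(h_1\circ k_{\sigma(1)})\cdots(h_p\circ k_{\sigma(p)})=\epsilon(\sigma)\Bigl(\prod_{i=1}^{p}\langle\alpha_i,\delta_{\sigma(i)}\rangle\Bigr)(\gamma_1\wedge\cdots\wedge\gamma_p)\otimes(\beta_1\wedge\cdots\wedge\beta_p),\]
and summing over $\sigma\in S_p$ reproduces, via the determinant expansion above, the left-hand side; this is the first equality. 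The second equality is symmetric: $(h_{\sigma(1)}\circ k_1)\cdots(h_{\sigma(p)}\circ k_p)=\epsilon(\sigma)\bigl(\prod_i\langle\alpha_{\sigma(i)},\delta_i\rangle\bigr)(\gamma_1\wedge\cdots\wedge\gamma_p)\otimes(\beta_1\wedge\cdots\wedge\beta_p)$, and summing gives $\det\bigl(\langle\alpha_i,\delta_j\rangle\bigr)$ again because a matrix and its transpose have the same determinant. Finally, specializing to $h=h_1=\cdots=h_p$ and $k=k_1=\cdots=k_p$, every summand on the right collapses to the exterior power $(h\circ k)^{p}$, and since $|S_p|=p!$ one reads off $h^{p}\circ k^{p}=p!\,(h\circ k)^{p}$.

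The only point that really needs attention — and the step I would flag — is the normalization constant in the Gram-determinant formula for $\langle\alpha_1\wedge\cdots\wedge\alpha_p,\delta_1\wedge\cdots\wedge\delta_p\rangle$: the identity depends on this being exactly $\det(\langle\alpha_i,\delta_j\rangle)$, with no $1/p!$ factor, since otherwise a spurious factorial would appear and the clean relation $h^{p}\circ k^{p}=p!\,(h\circ k)^{p}$ would be thrown off. (As a consistency check, the special case can also be obtained functorially: under the isomorphism ${\mathcal T}$ the double form $h^{p}/p!$ corresponds to the exterior power $\Lambda^{p}\overline h$, so $\Lambda^{p}\overline h\circ\Lambda^{p}\overline k=\Lambda^{p}(\overline h\circ\overline k)$ rearranges to $h^{p}\circ k^{p}=p!\,(h\circ k)^{p}$ — but for the full permutation-indexed statement the decomposable computation is the natural route.) Everything else is routine bookkeeping with signs of permutations.
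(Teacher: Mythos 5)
Your proof is correct and follows essentially the same route as the paper's: reduce to decomposable $(1,1)$-forms, identify the scalar in $(h_1\cdots h_p)\circ(k_1\cdots k_p)$ as the Gram determinant $\det(\langle\alpha_i,\delta_j\rangle)$, and recover each side of the identity from the two expansions of that determinant. Your explicit multilinearity reduction and the check on the normalization of the inner product on $\Lambda^{p}V^{*}$ are points the paper leaves implicit, but the argument is the same.
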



\begin{proof}
We  assume that $h_i=\theta_i\otimes \vartheta_i$ and $k_i=\theta'_i\otimes \vartheta'_i$, where $ \theta_i, \vartheta_i, \theta'_i,$ and $\vartheta'_i$ are in $V^*$, then by definition of the exterior product of double forms,
 we have
$$ h_1...h_p=\theta_1\wedge...\wedge\theta_p\otimes \vartheta_1\wedge...\wedge\vartheta_p,$$
and
$$k_1...k_p=\theta'_1\wedge...\wedge\theta'_p\otimes \vartheta'_1\wedge...\wedge\vartheta'_p,$$
It follows from the definition of the composition product  that
\[(h_1h_2...h_p)\circ (k_1k_2...k_p)=\det(\langle \theta_i,\vartheta'_j\rangle)[\theta'_1\wedge...\wedge\theta'_p\otimes \vartheta_1\wedge...\wedge\vartheta_p]\]
Now the determinant here can be expanded in two different ways:
\[\det(\langle\theta_i,\vartheta'_j\rangle)=\sum_{\sigma\in S_p}\varepsilon_{\sigma}\langle\theta_1,\vartheta'_{\sigma(1)}\rangle...\langle\theta_p,\vartheta'_{\sigma(p)}\rangle=
\sum_{\sigma\in S_p}\varepsilon_{\sigma}\langle\theta_{\sigma(1)},\vartheta'_1\rangle...\langle\theta_{\sigma(p)},\vartheta'_p\rangle\]
Therefore, using the definition of the composition product, we get:
\begin{align*}
(h_1h_2...h_p)\circ & (k_1k_2...k_p)=
 \sum_{\sigma\in S_p}\varepsilon_{\sigma}\langle\theta_1,\vartheta'_{\sigma(1)}\rangle...\langle\theta_p,\vartheta'_{\sigma(p)}\rangle
[\theta'_1\wedge...\wedge\theta'_p\otimes \vartheta_1\wedge...\wedge\vartheta_p]\\
=& \sum_{\sigma\in S_p}\langle\theta_1,\vartheta'_{\sigma(1)}\rangle...\langle\theta_p,\vartheta'_{\sigma(p)}\rangle
[\theta'_{\sigma(1)}\wedge...\wedge\theta'_{\sigma (p)}\otimes \vartheta_1\wedge...\wedge\vartheta_p]\\
=& \sum_{\sigma\in S_p} \Big(\prod_{i=1}^p (\theta_i\otimes \vartheta'_i)\circ   ( \theta'_{\sigma(i)}  \otimes  \vartheta'_{\sigma(i)})\Big)
=\sum_{\sigma\in S_p} \Big( \prod_{i=1}^p h_1\circ k_{\sigma(i)}\Big).
\end{align*}
If we use the second expansion of  the determinant  we get the second formula using the same arguments.
\end{proof}


\section{Pontrjagin classes and  $p$-Pure curvature tensors }

\subsection{Alternating operator, Bianchi map}
We define the alternating operator as follows:
\begin{align*}
{\rm Alt:}& {\mathcal D}^{p,p}(V^*)\longrightarrow \Lambda^{2p}(V^*)\\
&\omega\mapsto {\rm Alt}(\omega)(v_1,...,v_p,v_{p+1},...v_{2p})\\
&=\frac{1}{(2p)!}\sum_{\sigma\in S_{2p}}\varepsilon(\sigma)\omega(v_{\sigma(1)}\wedge...\wedge v_{\sigma(p)},v_{\sigma(p+1)}\wedge ...\wedge v_{\sigma(2p)}).
\end{align*}

Another basic map in ${\mathcal D}(V^*)$ is the \emph{ first Bianchi map}, denoted by $ \mathfrak{S}$. It maps ${\mathcal D}^{p,q}(V^*)$
into $D^{p+1,q-1}(V^*)$ and is defined as follows. Let $\omega\in {\mathcal D}^{p,q}(V^*)$, set $ \mathfrak{S}\omega=0$ if $q=0$.
Otherwise set
\begin{equation}
 \mathfrak{S}\omega(e_1\wedge...\wedge e_{p+1}, e_{p+2}\wedge...\wedge e_{p+q})=
\frac{1}{p!}\sum_{\sigma\in S_{p+1}}\varepsilon(\sigma)\omega(e_{\sigma(1)}\wedge...\wedge e_{\sigma(p)}, e_{\sigma(p+1)}\wedge e_{p+2}\wedge...\wedge e_{p+q}).
\end{equation}
In other terms, $ \mathfrak{S}$ is a partial alternating operator with respect to the first $(p+1)$ arguments. If we assume that $p=q$, then the composition
\begin{equation}\label{Bp}
 \mathfrak{S}^p:= \mathfrak{S}\circ...\circ  \mathfrak{S},
\end{equation}
is up to a constant factor, the alternating operator $\rm{Alt}$. In particular, we have the following relation observed first by Thorpe \cite{Thorpe} and Stehney \cite{Stehney}
\begin{lemma}
if $\omega\in \ker \mathfrak{S}$, then ${\rm Alt}(\omega)=0.$
\end{lemma}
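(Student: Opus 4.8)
The plan is to obtain the lemma as an immediate consequence of the relation recorded just above its statement, namely that the iterated first Bianchi map $\mathfrak{S}^p$ agrees on ${\mathcal D}^{p,p}(V^*)$ with the alternating operator up to a \emph{nonzero} scalar. Write $\mathfrak{S}^p = c_p\,{\rm Alt}$ with $c_p\neq 0$. By its very definition (\ref{Bp}), $\mathfrak{S}^p=\mathfrak{S}\circ\cdots\circ\mathfrak{S}$ is the composition
$${\mathcal D}^{p,p}(V^*)\longrightarrow{\mathcal D}^{p+1,p-1}(V^*)\longrightarrow\cdots\longrightarrow{\mathcal D}^{2p,0}(V^*)=\Lambda^{2p}(V^*),$$
so in particular $\mathfrak{S}^p=\mathfrak{S}^{p-1}\circ\mathfrak{S}$. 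Hence, if $\omega\in\ker\mathfrak{S}$, then $\mathfrak{S}^p\omega=\mathfrak{S}^{p-1}(\mathfrak{S}\omega)=\mathfrak{S}^{p-1}(0)=0$, whence $c_p\,{\rm Alt}(\omega)=0$ and therefore ${\rm Alt}(\omega)=0$. All the content of the statement is thus packed into the assertion $c_p\neq 0$; everything else is formal.

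To see $c_p\neq 0$ I would expand the definition of ${\rm Alt}(\omega)$ and use that $\omega$ is already skew-symmetric in its first $p$ and in its last $p$ arguments: the $(2p)!$ permutations collapse and ${\rm Alt}(\omega)$ becomes the signed $(p,p)$-shuffle sum $\tfrac{p!\,p!}{(2p)!}\sum_{A}\varepsilon_A\,\omega(v_A;v_{A^c})$ over the $p$-element subsets $A\subseteq\{1,\dots,2p\}$. On the other side, each application of $\mathfrak{S}$ to a double form of bidegree $(r,s)$ performs, up to the factor $\tfrac1{r!}$ in its definition and the surviving skew-symmetries, the signed transfer of one argument from the second (skew) block into the first (skew) block; after $p$ such steps every argument has been antisymmetrized against every other, so $\mathfrak{S}^p\omega$ is again the same signed shuffle sum, equal to ${\rm Alt}(\omega)$ times a product of the positive integers arising from the successive normalizations. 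In particular $c_p$ is a positive rational. Even more cheaply, one may test on a single form: for $n\geq 2p$, take $\omega=(e_1^*\wedge\cdots\wedge e_p^*)\otimes(e_{p+1}^*\wedge\cdots\wedge e_{2p}^*)$ in an orthonormal coframe; evaluating on $(e_1,\dots,e_{2p})$ shows ${\rm Alt}(\omega)$ is a nonzero multiple of $e_1^*\wedge\cdots\wedge e_{2p}^*$, while $\mathfrak{S}^p\omega$ is visibly such a multiple as well, forcing $c_p\neq 0$. (When $n<2p$, both ${\rm Alt}$ and $\mathfrak{S}^p$ vanish identically on ${\mathcal D}^{p,p}(V^*)$, so the lemma is trivially true.)

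The only genuine obstacle is therefore the combinatorial identification $\mathfrak{S}^p=c_p\,{\rm Alt}$ together with the nonvanishing $c_p\neq 0$; this is exactly the observation attributed above to Thorpe and Stehney, and I would either cite it or dispose of it by the one-line test just described. Granting it, the lemma — equivalently, the inclusion $\ker\mathfrak{S}\subseteq\ker{\rm Alt}$ — is immediate.
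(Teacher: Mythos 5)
Your proof is correct and takes essentially the same route the paper intends: the lemma is presented there as an immediate consequence of the preceding observation that $\mathfrak{S}^p$ equals ${\rm Alt}$ up to a constant factor, and you simply make that deduction ($\omega\in\ker\mathfrak{S}\Rightarrow\mathfrak{S}^p\omega=0\Rightarrow c_p\,{\rm Alt}(\omega)=0$) explicit. The only content you add is the verification that the constant $c_p$ is nonzero, which the paper leaves implicit and which your shuffle-sum computation and test form handle adequately.
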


\begin{lemma}
The linear application ${\rm Alt}$ is surjective.
\end{lemma}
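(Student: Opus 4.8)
The plan is to exhibit, for an arbitrary target $\alpha\in\Lambda^{2p}(V^*)$, an explicit double form $\omega\in{\mathcal D}^{p,p}(V^*)$ with ${\rm Alt}(\omega)=\alpha$. The natural candidate is $\omega$ itself: regard a $2p$-form $\alpha$ as a bilinear form $\Lambda^pV\times\Lambda^pV\to{\mathbf R}$ by the formula $\omega(v_1\wedge\cdots\wedge v_p,\,v_{p+1}\wedge\cdots\wedge v_{2p}):=\alpha(v_1,\ldots,v_{2p})$, which is well defined since $\alpha$ is already skew in all $2p$ slots, hence in particular skew in the first $p$ and in the last $p$. So $\omega$ is a genuine $(p,p)$ double form.

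The key computation is then to check that ${\rm Alt}(\omega)=\alpha$ for this choice. Plugging into the definition,
\begin{align*}
{\rm Alt}(\omega)(v_1,\ldots,v_{2p})
&=\frac{1}{(2p)!}\sum_{\sigma\in S_{2p}}\varepsilon(\sigma)\,\omega(v_{\sigma(1)}\wedge\cdots\wedge v_{\sigma(p)},\,v_{\sigma(p+1)}\wedge\cdots\wedge v_{\sigma(2p)})\\
&=\frac{1}{(2p)!}\sum_{\sigma\in S_{2p}}\varepsilon(\sigma)\,\alpha(v_{\sigma(1)},\ldots,v_{\sigma(2p)})
=\frac{1}{(2p)!}\sum_{\sigma\in S_{2p}}\varepsilon(\sigma)^2\,\alpha(v_1,\ldots,v_{2p})=\alpha(v_1,\ldots,v_{2p}),
\end{align*}
using the skew-symmetry of $\alpha$ to pull out $\varepsilon(\sigma)$ and then $\varepsilon(\sigma)^2=1$, together with $|S_{2p}|=(2p)!$. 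This shows ${\rm Alt}(\omega)=\alpha$, so ${\rm Alt}$ is surjective.

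There is essentially no obstacle here; the only point requiring a moment's care is the verification that the prescription $\omega(v_1\wedge\cdots\wedge v_p,v_{p+1}\wedge\cdots\wedge v_{2p}):=\alpha(v_1,\ldots,v_{2p})$ does define an element of ${\mathcal D}^{p,p}(V^*)=\Lambda^pV^*\otimes\Lambda^pV^*$, i.e.\ that it is multilinear and alternating separately in the first $p$ and in the last $p$ arguments — which is immediate from $\alpha\in\Lambda^{2p}(V^*)$. One could alternatively phrase the argument as: the composite $\Lambda^{2p}V^*\hookrightarrow{\mathcal D}^{p,p}(V^*)\xrightarrow{\ {\rm Alt}\ }\Lambda^{2p}V^*$, where the first map is the natural inclusion just described, is the identity, hence ${\rm Alt}$ is a surjection (indeed a split surjection). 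I would present it in this second, slightly more structural form since it also makes clear that ${\rm Alt}$ restricted to the image of $\Lambda^{2p}V^*$ is a projection.
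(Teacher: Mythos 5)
Your proof is correct and is essentially the paper's own argument: the paper likewise observes that a $2p$-form, viewed as a $(p,p)$ double form via the natural inclusion, is mapped by ${\rm Alt}$ to itself. You simply carry out explicitly the verification that the paper leaves as a one-line remark.
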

\begin{proof}
If $\omega$ is a $(2p)$-form in $\Lambda^{2p}(V^*)$, then $\omega$ is as well a $(p,p)$ double form whose image under the alternating operator is the $(2p)$-form $\omega$ itself.
\end{proof}

\begin{lemma}
We have the following isomorphism
$${\mathcal D}^{p,p}(V)/\ker {\rm Alt}\cong \Lambda^{2p}(V), $$
In particular, we have the following orthogonal decomposition
$${\mathcal D}^{p,p}(V)=\ker{\rm Alt}\oplus \Lambda^{2p}(V).$$
\end{lemma}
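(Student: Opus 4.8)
The plan is to establish the two claims in succession, deriving the orthogonal decomposition from the isomorphism plus the surjectivity of $\mathrm{Alt}$ already proved in the preceding lemma. First I would observe that $\mathrm{Alt}: {\mathcal D}^{p,p}(V) \to \Lambda^{2p}(V)$ is a surjective linear map between finite-dimensional vector spaces, so by the first isomorphism theorem ${\mathcal D}^{p,p}(V)/\ker\mathrm{Alt} \cong \mathrm{image}(\mathrm{Alt}) = \Lambda^{2p}(V)$, which is the first assertion. The content that needs care is the claim that this quotient splits \emph{orthogonally}, i.e. that the copy of $\Lambda^{2p}(V)$ sitting inside ${\mathcal D}^{p,p}(V)$ as a complement to $\ker\mathrm{Alt}$ can be taken to be the orthogonal complement $(\ker\mathrm{Alt})^\perp$ with respect to the inner product on double forms.

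The key step is to identify the natural inclusion $j: \Lambda^{2p}(V) \hookrightarrow {\mathcal D}^{p,p}(V)$ — a $2p$-form viewed as a $(p,p)$ double form, as used in the proof of surjectivity — and to show that its image is exactly $(\ker\mathrm{Alt})^\perp$. Since $\dim {\mathcal D}^{p,p}(V) = \dim\ker\mathrm{Alt} + \dim(\ker\mathrm{Alt})^\perp$ and, by the rank-nullity count, $\dim\ker\mathrm{Alt} + \dim\Lambda^{2p}(V) = \dim{\mathcal D}^{p,p}(V)$, it suffices to prove the single inclusion $j(\Lambda^{2p}(V)) \subseteq (\ker\mathrm{Alt})^\perp$; equality then follows by dimension count, and the orthogonal direct sum ${\mathcal D}^{p,p}(V) = \ker\mathrm{Alt} \oplus (\ker\mathrm{Alt})^\perp$ becomes the stated decomposition. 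To get the inclusion I would take $\omega \in \ker\mathrm{Alt}$ and a $2p$-form $\alpha$ and compute $\langle \omega, j(\alpha)\rangle$ in an orthonormal basis of $\Lambda^{2p}V$; because $j(\alpha)$ is totally antisymmetric in all $2p$ slots, pairing against it only detects the totally antisymmetric part of $\omega$, which up to a normalizing constant is $\mathrm{Alt}(\omega)$. Concretely, $\langle \omega, j(\alpha)\rangle$ is proportional to $\langle \mathrm{Alt}(\omega), \alpha\rangle$, which vanishes by hypothesis. Hence $j(\alpha) \perp \ker\mathrm{Alt}$.

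I expect the main obstacle to be precisely this normalization bookkeeping: making the equality $\langle \omega, j(\alpha)\rangle = c_p\,\langle \mathrm{Alt}(\omega), \alpha\rangle$ honest requires care about the factor relating the inner product on ${\mathcal D}^{p,p}$ (induced from $\Lambda^p V \otimes \Lambda^p V$) and the inner product on $\Lambda^{2p}V$, together with the combinatorial factor $\binom{2p}{p}$-type constant built into the definition of $\mathrm{Alt}$. This is a routine but delicate computation, and the nonzero constant $c_p$ is all that matters for the argument — its exact value is irrelevant, since one only needs that $\langle \omega, j(\alpha)\rangle = 0$ whenever $\mathrm{Alt}(\omega) = 0$. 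Once the inclusion is in hand, the remainder is formal: the orthogonal projection ${\mathcal D}^{p,p}(V) \to (\ker\mathrm{Alt})^\perp$ followed by the identification $(\ker\mathrm{Alt})^\perp \cong \Lambda^{2p}(V)$ (via $j^{-1}$, or equivalently via a rescaled $\mathrm{Alt}$) realizes the isomorphism, and the decomposition is exactly the statement that every double form is uniquely the sum of an element of $\ker\mathrm{Alt}$ and a (rescaled) genuine $2p$-form.
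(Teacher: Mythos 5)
Your proof is correct, and in fact the paper states this lemma without any proof at all, so your argument fills a genuine gap rather than duplicating one. The two ingredients you use are exactly the right ones: the first isomorphism theorem applied to the surjection $\mathrm{Alt}$ (surjectivity being the content of the preceding lemma, via the inclusion $j$ which satisfies $\mathrm{Alt}\circ j=\mathrm{id}$ and is therefore injective), and the identity $\langle \omega, j(\alpha)\rangle = \binom{2p}{p}\langle \mathrm{Alt}(\omega),\alpha\rangle$, which shows $j(\Lambda^{2p}(V))\subseteq(\ker\mathrm{Alt})^\perp$ and hence, by your dimension count, that the copy of $\Lambda^{2p}(V)$ inside $\mathcal{D}^{p,p}(V)$ is precisely the orthogonal complement of $\ker\mathrm{Alt}$. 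The constant $\binom{2p}{p}=(2p)!/(p!)^2$ comes out of grouping the $(2p)!$ permutations in the definition of $\mathrm{Alt}$ into shuffle classes of size $(p!)^2$; as you say, its exact value is irrelevant, only its nonvanishing. This computation is equivalent to saying that $j$ is (up to that constant) the adjoint of $\mathrm{Alt}$, which is perhaps the cleanest way to phrase why the image of $j$ is $(\ker\mathrm{Alt})^\perp=(\mathrm{im}\,\mathrm{Alt}^{*})$. No gaps.
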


\subsection{p-Pure Riemannian manifolds}
According to  Maillot \cite{Maillot}, a Riemannian $n$-manifold has a \emph{pure curvature tensor} if at each point of the manifold there exists an orthonormal basis $(e_i)$  of the tangent space at this point such that the Riemann curvature tensor $R$ belongs to 
${\rm Span}\{e_i^*\wedge e^*_j\otimes e_i^*\wedge e^*_j:1\leq i<j\leq n \}$. This class contains all conformally flat manifolds, hypersurfaces of space forms and all three dimensional Riemannian manifolds. Maillot proved in \cite{Maillot} that all pontrjagin forms of a pure Riemannian manifold vanish. In this section we are going to refine  this result.

\begin{definition}
Let $1\leq p\leq n/2$ be a positive integer. A Riemannian $n$-manifold  is said to have a \emph{$p$-pure curvature tensor} if at each point of the manifold      there exists an orthonormal basis $(e_i)$  of the tangent space at this point such that  the exterior power $R^p$ of $R$  belongs to
$${\rm Span}\{e_{i_1}^*\wedge ... \wedge e^*_{i_p}\otimes e_{i_1}^*\wedge ... \wedge e^*_{i_p}:1\leq i_1<...<i_p\leq n.\}$$.
\end{definition}

The previous definition can be re-formulated using the exterior product of double forms as follows
\begin{proposition}\label{ppure}
Let $1\leq p\leq n/2$ be a positive integer. A  Riemann $n$-manifold with Riemann curvature tensor $R$ is  $p$-pure if and only if 
at each point of the manifold, there exists a   family $\{h_i:i\in I\}$ of simultaneously diagonalizable symmetric  bilinear  forms on the tangent space   such that  the exterior power $R^p$ of $R$ at that point  belongs to
\[ {\rm Span}\{h_{i_1}...h_{i_p}:i_1,..i_p\in I\}.\]
\end{proposition}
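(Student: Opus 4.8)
The plan is to show the equivalence between the basis-dependent definition of $p$-purity and the coordinate-free characterization in terms of simultaneously diagonalizable bilinear forms, working at a single point of the manifold. The key observation is that the two spanning sets in the definition and in the proposition are actually equal once one allows the family $\{h_i\}$ to be simultaneously diagonalizable. First I would establish the easy direction: suppose $R^p$ lies in $\mathrm{Span}\{e_{i_1}^*\wedge\cdots\wedge e_{i_p}^*\otimes e_{i_1}^*\wedge\cdots\wedge e_{i_p}^*\}$ for some fixed orthonormal basis $(e_i)$. Then taking $h_i = e_i^*\otimes e_i^*$ for $1\leq i\leq n$, these are manifestly simultaneously diagonalizable (all diagonal in the basis $(e_i)$), each is a symmetric $(1,1)$ double form, and by the definition of the exterior product of double forms one has $h_{i_1}h_{i_2}\cdots h_{i_p} = e_{i_1}^*\wedge\cdots\wedge e_{i_p}^*\otimes e_{i_1}^*\wedge\cdots\wedge e_{i_p}^*$ whenever the indices are distinct (and the product is zero if any index repeats, since then a wedge factor repeats). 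Hence $R^p$ lies in $\mathrm{Span}\{h_{i_1}\cdots h_{i_p}\}$, giving one implication.

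For the converse, suppose $R^p \in \mathrm{Span}\{h_{i_1}\cdots h_{i_p} : i_1,\dots,i_p\in I\}$ where the $h_i$ are symmetric and simultaneously diagonalizable. The point is that a finite family of simultaneously diagonalizable symmetric bilinear forms can be jointly diagonalized by a single orthonormal basis $(e_j)$ of the tangent space: there exists such a basis in which every $h_i = \sum_{j=1}^n \lambda_{ij}\, e_j^*\otimes e_j^*$ for scalars $\lambda_{ij}$. Expanding any product $h_{i_1}\cdots h_{i_p}$ using bilinearity of the exterior product of double forms and the fact that $e_j^*\wedge e_j^*=0$, one gets
\[
h_{i_1}\cdots h_{i_p} = \sum_{j_1<\cdots<j_p} \Big(\sum_{\tau\in S_p}\lambda_{i_1 j_{\tau(1)}}\cdots\lambda_{i_p j_{\tau(p)}}\Big)\, e_{j_1}^*\wedge\cdots\wedge e_{j_p}^*\otimes e_{j_1}^*\wedge\cdots\wedge e_{j_p}^*,
\]
which shows each such product lies in $\mathrm{Span}\{e_{j_1}^*\wedge\cdots\wedge e_{j_p}^*\otimes e_{j_1}^*\wedge\cdots\wedge e_{j_p}^*\}$. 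Taking linear combinations, $R^p$ lies in that span as well, which is exactly the condition in the definition of $p$-pure curvature (with the orthonormal basis being the joint eigenbasis $(e_j)$). This completes the equivalence.

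\textbf{Main obstacle.} The only genuinely substantive point is the claim that a finite family of simultaneously diagonalizable symmetric bilinear forms admits a common orthonormal eigenbasis — i.e., that "simultaneously diagonalizable" can be upgraded to "simultaneously \emph{orthogonally} diagonalizable" in the Euclidean setting. This is a standard fact (commuting symmetric operators on a Euclidean space are simultaneously orthogonally diagonalizable, and simultaneous diagonalizability of symmetric forms is equivalent to commuting), but it is where one must be careful about what hypothesis "simultaneously diagonalizable" encodes; I would state it as a lemma or simply invoke it. Everything else is a routine unwinding of the definitions of the exterior product of double forms and of the tensor $e_i^*\otimes e_i^*$, using that wedge powers kill repeated factors.
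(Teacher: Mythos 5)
Your proposal is correct and follows essentially the same route as the paper's proof: the forward direction takes $h_i=e_i^*\otimes e_i^*$, and the converse passes to a common orthonormal eigenbasis and expands the exterior products $h_{i_1}\cdots h_{i_p}$ in that basis. Your explicit remark that ``simultaneously diagonalizable'' must be read as admitting a common \emph{orthonormal} eigenbasis is a point the paper uses implicitly without comment, but it does not change the argument.
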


We notice that the condition that the    family $\{h_i:i\in I\}$ consists of simultaneously diagonalizable symmetric  bilinear  forms  is equivalent to the fact that $h_i^t=h_i$ and $h_i\circ h_j=h_j\circ h_i$ for all $i,j\in I$.\\

\begin{proof}
Assume that $R$ is $p$-pure, then by definition we have
\begin{align*}
R^p=&\sum_{1\leq  i_1<...<i_p\leq n }\lambda_{i_1...i_p}   e_{i_1}^*\wedge ... \wedge e^*_{i_p}\otimes e_{i_1}^*\wedge ... \wedge e^*_{i_p}\\
=&\sum_{1\leq  i_1<...<i_p\leq n } \lambda_{i_1...i_p}  \big( e_{i_1}^*\otimes e^*_{i_1}\big) \big( e_{i_2}^*\otimes e^*_{i_2}\big)... \big( e_{i_p}^*\otimes e^*_{i_p}\big)\\
=& \sum_{1\leq  i_1<...<i_p\leq n } \lambda_{i_1...i_p}h_{i_1}...h_{i_p}.
\end{align*}
Where $h_i=e_i^*\otimes e_i^*$. It is clear that $h_i^t=h_i$ and $h_i\circ h_j=\delta_{ij} e_j^*\otimes e_i^*=h_j\circ h_i$.\\
Conversely, assume that there exists a   family $\{h_i:i\in I\}$ of simultaneously diagonalizable symmetric  bilinear  forms  such that   $R^p=\sum_{i_1,...,i_p\in I}\lambda_{i_1...i_p}h_{i_1}...h_{i_p}$. Let $(e_i)$ be an orthonormal basis of the tangent space at the point under consideration that diagonalizes simultaneously all the bilinear forms in the family  $\{h_i:i\in I\}$. Then if $h_{i_k}=\sum_{j_k=1}^{n}\rho_{i_kj_k}e_{j_k}^*\otimes e_{j_k}^*$ for each $k=1,...,p$ we have
 \begin{align*}
 R^p=&\sum_{i_1,...,i_p\in I}\lambda_{i_1...i_p}h_{i_1}...h_{i_p}\\
=& \sum_{i_1,...,i_p\in I}\sum_{j_1,...,j_p=1}^{n}\lambda_{i_1...i_p}\rho_{i_1j_1}...\rho_{i_pj_p}\Big(e_{j_1}^*\otimes e_{j_1}^*\Big)...\Big(e_{j_p}^*\otimes e_{j_p}^*\Big)\\
=& \sum_{i_1,...,i_p\in I}\sum_{j_1,...,j_p=1}^{n}\lambda_{i_1...i_p}\rho_{i_1j_1}...\rho_{i_pj_p}e_{j_1}^*\wedge ... \wedge e_{j_p}^*\otimes e_{j_1}^*\wedge ... \wedge e_{j_p}^*.
\end{align*}
This completes the proof.
\end{proof}

We list below several examples and facts about  $p$-pure manifolds
\begin{enumerate}
\item It is clear that every pure Riemannian  manifold is $p$-pure for any $p\geq 1$. More generally, if a Riemannian manifold is $p$-pure for some $p$ then it is $pq$-pure for any $q\geq 1$.\\
However the converse it is not always true. A Riemannian manifold can be $p$-pure for some $p>1$ without being pure as one can see from the three examples below.
\item A Riemannian manifold of dimension $n=2p$ is always $p$-pure. This follows from the fact that in this case  $R^p$ is proportinal to $g^{n}$.
\item   A Riemannian manifold of dimension $n=2p+1$ is always $p$-pure. In fact, it follows from Proposition 2.1 in \cite{Labbi-eins} that
\[R^p=\omega_1g^{2p-1}+\omega_0g^{2p},\]
where $\omega_1$ is a symmetric bilinear form and $\omega_0$ is a scalar. 
\item A Riemannian manifold with constant $p$-sectional curvature, in the sense of Thorpe \cite{Thorpe},  is $p$-pure. In fact, constant $p$-sectionnal curvature is equivalent to the fact that  $R^p$ is proportional to $g^{2p}$.

\end{enumerate}

We are now ready to state and prove the following Theorem
\begin{theorem}
If a Riemannian $n$-manifold is $k$-pure and $n\geq 4k$  then its Pontrjagin class of degree $4k$ vanishes. 
\end{theorem}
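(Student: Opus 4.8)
The plan is to reduce the statement to a pointwise algebraic identity for the double form $R^k\circ R^k$ and then exploit $k$-purity. First I would invoke the Labbi--Bianchi formula recalled in the introduction: specializing it to $m=k$, $k_1=\cdots=k_{k-1}=0$ and $k_k=1$ (so that $k_1+2k_2+\cdots+mk_m=k$), it asserts that the Pontrjagin form of degree $4k$ equals a nonzero universal constant times ${\rm Alt}\bigl(R^k\circ R^k\bigr)$, where $R$ is the Riemann curvature tensor viewed as a $(2,2)$ double form, $R^k$ is its $k$-th exterior power, and $\circ$ is the composition product. That formula is stated for closed $4k$-manifolds as an equality of Pontrjagin numbers, but it is really an identity between universal curvature expressions, hence valid at the form level on any $n$-manifold with $n\ge4k$ (for $n<4k$ the statement is vacuous, every $4k$-form being zero). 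So it suffices to show that ${\rm Alt}(R^k\circ R^k)$ vanishes at every point of a $k$-pure manifold, since a de Rham representative that is identically zero represents the zero cohomology class.

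Next I would compute $R^k\circ R^k$ at a given point using purity. By Proposition \ref{ppure}, together with a simultaneous diagonalization of the symmetric bilinear forms it provides, we may write $R^k=\sum_{I}\lambda_I\,\omega_I\otimes\omega_I$, the sum over increasing multi-indices $I=(i_1<\cdots<i_k)$, where $\omega_I=e_{i_1}^*\wedge\cdots\wedge e_{i_k}^*$ for a fixed orthonormal basis $(e_i)$ and the $\lambda_I$ are scalars; in particular the forms $\omega_I$ are mutually orthonormal and each is decomposable. From the definition of the composition product, $(\omega_I\otimes\omega_I)\circ(\omega_J\otimes\omega_J)=\langle\omega_I,\omega_J\rangle\,\omega_J\otimes\omega_I=\delta_{IJ}\,\omega_I\otimes\omega_I$, and therefore
\[
R^k\circ R^k=\sum_{I}\lambda_I^2\,\omega_I\otimes\omega_I .
\]
(The same computation is an instance of the Greub--Vanstone identity of Theorem \ref{Greubformula}, applied term by term to the rank-one pieces via $h^p\circ k^p=p!\,(h\circ k)^p$.) Thus $R^k\circ R^k$ is again of ``pure'' shape: a linear combination of squares $\omega\otimes\omega$ with $\omega$ decomposable.

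Then I would apply the alternating operator. For any exterior $k$-form $\psi$, a direct check from the definition of ${\rm Alt}$ shows that ${\rm Alt}(\psi\otimes\psi)$ is a nonzero universal multiple of the exterior square $\psi\wedge\psi$: fully antisymmetrizing the $2k$-linear form $(v_1,\dots,v_{2k})\mapsto\psi(v_1,\dots,v_k)\psi(v_{k+1},\dots,v_{2k})$ just reproduces $\psi\wedge\psi$ up to normalization. But each $\omega_I$ is decomposable, so $\omega_I\wedge\omega_I=0$ (it repeats the factor $e_{i_1}^*$); hence ${\rm Alt}(\omega_I\otimes\omega_I)=0$ for every $I$, and by linearity ${\rm Alt}(R^k\circ R^k)=0$. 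With the first paragraph this gives that the Pontrjagin form, and hence the Pontrjagin class, of degree $4k$ vanishes; for $k=1$ one recovers Maillot's vanishing theorem \cite{Maillot}.

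The hard part is the first step: one must pin down precisely that the degree-$4k$ Pontrjagin form is genuinely a nonzero constant multiple of ${\rm Alt}(R^k\circ R^k)$ pointwise on an arbitrary $n$-manifold with $n\ge4k$ --- with no extra contraction terms appearing when $n>4k$ --- rather than merely as an identity of Pontrjagin numbers in top degree as in \cite{Labbi-Bianchi}. Once that is settled, the remaining two steps are routine within the composition-product calculus of the paper; the only care needed there is the orthonormality of the decomposable forms supplied by Proposition \ref{ppure} and the elementary combinatorial identity relating ${\rm Alt}(\psi\otimes\psi)$ to $\psi\wedge\psi$.
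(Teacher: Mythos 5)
Your argument is correct, but it follows a different route from the paper's main proof --- in fact it coincides with the short alternative argument the authors give in the Remark immediately after the theorem. The paper's principal proof works with the basis-free characterization of $k$-purity from Proposition \ref{ppure}: it writes $R^k$ as a combination of exterior products $h_{i_1}\cdots h_{i_k}$ of simultaneously diagonalizable symmetric bilinear forms, applies the Greub--Vanstone identity of Theorem \ref{Greubformula} to exhibit $R^k\circ R^k$ as a combination of exterior products of the symmetric bilinear forms $h_i\circ h_j$, and then concludes via the structural fact that symmetric $(1,1)$-forms lie in $\ker\mathfrak{S}$ and that $\ker\mathfrak{S}$ is closed under exterior products (Kulkarni), so that ${\rm Alt}$ kills $R^k\circ R^k$. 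Your proof instead fixes the adapted orthonormal basis from the definition, computes $R^k\circ R^k=\sum_I\lambda_I^2\,\omega_I\otimes\omega_I$ directly from $(\omega_I\otimes\omega_I)\circ(\omega_J\otimes\omega_J)=\delta_{IJ}\,\omega_I\otimes\omega_I$, and kills each term because ${\rm Alt}(\omega_I\otimes\omega_I)$ is a multiple of $\omega_I\wedge\omega_I=0$ for decomposable $\omega_I$; this is more elementary and entirely computational, while the paper's route yields the stronger intermediate statement that $R^k\circ R^k\in\ker\mathfrak{S}$ and better displays the role of the composition--exterior interaction that the paper is advertising. One further comment on your first step, which you rightly single out as the delicate point: you do not need to upgrade the Pontrjagin-number identity of \cite{Labbi-Bianchi} to a pointwise statement yourself --- the paper takes the pointwise representative $P_k(R)=\frac{1}{(k!)^2(2\pi)^{2k}}{\rm Alt}(R^k\circ R^k)$ on an arbitrary $n$-manifold directly from Stehney \cite{Stehney}, so the ``hard part'' is settled by citation rather than by the universality argument you sketch.
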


\begin{proof}
Denote by $R$ the Riemann curvature tensor of the given Riemannian manifold. Then the following differential form is a representative of the  Pontrjagin class of degree $4k$ of the manifold  \cite{Stehney} 
\begin{equation}
P_k(R)=\frac{1}{(k!)^2(2\pi)^{2k}}{\rm Alt}\big(R^k\circ R^k\big).
\end{equation}
We are going to show that $P_k(R)$ vanishes.\\
According to proposition \ref{ppure}  there exists a   family $\{h_i:i\in I\}$ of simultaneously diagonalizable symmetric  bilinear  forms  such that   
$$R^k=\sum_{i_1,...,i_k\in I}\lambda_{i_1...i_k}h_{i_1}...h_{i_k}.$$
Therefore, we have
$$R^k\circ R^k=\sum_{i_1,...,i_k\in I\atop j_1,...,j_k\in I}\lambda_{i_1...i_k}\lambda_{j_1...j_k}h_{i_1}...h_{i_k}\circ  h_{j_1}...h_{j_k}.$$

Next, Proposition \ref{Greubformula}, shows that each term of the previous sum is an exterior product of double forms of the form $hi\circ h_j$ each of which is a  symmetric bilinear form  and therefore  belongs to the kernel of the first Bianchi sum $\mathfrak{S}$. On the other hand, the kernel of $\mathfrak{S}$ is closed under exterior products \cite{Kulkarni}, consequently, $R^k\circ R^k$ belongs to the kernel of $\mathfrak{S}$ and therfore ${\rm Alt}\big(R^k\circ R^k\big)=0$.
\end{proof}
\begin{remark}
We remark that the previous theorem can alternatively be proved directly  without using the identity of \ref{ppure} as follows.\\
Let us use multi-index and write $R^k=\sum_I\lambda_Ie_I\otimes e_I$ as in the definition, then
$${\rm Alt}\big(R^k\circ R^k\big)={\rm Alt}\Big(\sum_I\lambda_I^2e_I\otimes e_I\Big)=0.$$
\end{remark}
As a  direct consequence of the previous theorem, we obtain  the following  equivalent version to a result of Stehney  (Th\'{e}or\`{e}me 3.3,  \cite{Stehney}).
\begin{corollary}
Let M be a Riemannian manifold and p an  integer such that, $4p\leq n=\dim M$. If at any point $m\in M$ the Riemann curvature tensor $R$ satisfies
$$R^p=c_pA^p,$$
where $A: T_mM\longrightarrow T_mM$ is symmetric bilinear form and $c_p$ is a constant. Then the differential form ${\rm Alt}(R^p\circ R^p)$ is $0$. 
\end{corollary}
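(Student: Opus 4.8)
The plan is to show that the $4k$-form $P_k(R) = \frac{1}{(k!)^2(2\pi)^{2k}}\,{\rm Alt}(R^k\circ R^k)$ vanishes by proving that $R^k\circ R^k$ lies in $\ker\mathfrak{S}$, the kernel of the first Bianchi map, and then invoking the lemma that $\ker\mathfrak{S}\subseteq\ker{\rm Alt}$ (the Thorpe--Stehney observation). The key input is the $p$-purity hypothesis, which by Proposition \ref{ppure} gives, at each point, a family $\{h_i : i\in I\}$ of simultaneously diagonalizable symmetric bilinear forms with $R^k = \sum_{i_1,\dots,i_k\in I}\lambda_{i_1\dots i_k}\,h_{i_1}\cdots h_{i_k}$ (exterior products of double forms).

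First I would expand $R^k\circ R^k$ bilinearly over the composition product, obtaining a sum of terms of the shape $(h_{i_1}\cdots h_{i_k})\circ(h_{j_1}\cdots h_{j_k})$. Here I apply Greub--Vanstone's Theorem \ref{Greubformula}, which rewrites each such term as $\sum_{\sigma\in S_k}(h_{i_1}\circ h_{j_{\sigma(1)}})\cdots(h_{i_k}\circ h_{j_{\sigma(k)}})$, an exterior product of the $(1,1)$ double forms $h_{i_a}\circ h_{j_{\sigma(a)}}$. Since all the $h_i$ are simultaneously diagonalizable and symmetric, each factor $h_{i_a}\circ h_{j_{\sigma(a)}}$ is again a symmetric bilinear form (in a common eigenbasis it is diagonal). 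A symmetric $(1,1)$ double form trivially lies in $\ker\mathfrak{S}$, since $\mathfrak{S}$ on $\mathcal{D}^{1,1}$ is the antisymmetrization in two arguments and a symmetric bilinear form is annihilated by it.

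Next I would use the fact, due to Kulkarni \cite{Kulkarni}, that $\ker\mathfrak{S}$ is closed under the exterior product of double forms: a product of symmetric bilinear forms therefore lies in $\ker\mathfrak{S}$. Summing over $\sigma$, over the multi-indices, and over the two copies, the whole of $R^k\circ R^k$ lies in $\ker\mathfrak{S}$. Then the lemma $\ker\mathfrak{S}\subseteq\ker{\rm Alt}$ gives ${\rm Alt}(R^k\circ R^k)=0$, hence $P_k(R)=0$ as a differential form, so the Pontrjagin class it represents vanishes.

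The main obstacle—or rather the conceptual crux that makes everything work—is recognizing that Theorem \ref{Greubformula} converts a composition product of two exterior products into an \emph{exterior} product of composition products of $(1,1)$ forms; without this, $R^k\circ R^k$ would not obviously decompose into exterior products of symmetric bilinear forms, and the purity hypothesis would not connect to the Bianchi-kernel argument. A secondary point to handle carefully is the claim that $h_i\circ h_j$ is symmetric when the $h_i$ are simultaneously diagonalizable; this follows because in a common orthonormal eigenbasis $(e_a)$ one has $h_i = \sum_a \rho_{ia}\,e_a^*\otimes e_a^*$, so $h_i\circ h_j = \sum_a \rho_{ia}\rho_{ja}\,e_a^*\otimes e_a^*$, which is manifestly symmetric. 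Everything else is bookkeeping, and I would also note the alternative one-line argument given in the Remark: writing $R^k=\sum_I\lambda_I\,e_I\otimes e_I$ in the purity basis directly, $R^k\circ R^k = \sum_I\lambda_I^2\,e_I\otimes e_I$, which visibly has symmetric "diagonal" form and lies in $\ker\mathfrak{S}$, so ${\rm Alt}$ kills it.
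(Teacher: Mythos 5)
Your argument is correct and follows the paper's route exactly: the paper's proof of this corollary simply observes that $R^p=c_pA\cdots A$ exhibits $p$-purity (the single-element family $\{A\}$ is trivially a simultaneously diagonalizable family of symmetric forms) and then invokes the main theorem, whose proof is precisely the Greub--Vanstone / Kulkarni / Thorpe--Stehney chain you spell out. Nothing is missing; you have merely inlined the theorem's proof instead of citing it.
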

\begin{proof}
Since  $R^p=c_pA...A$ then it is $p$-pure, the result follows from the theorem.

\end{proof}

\address{Mathematics Department\\
College of Science\\
University of Bahrain\\
32038 Bahrain.}

\begin{thebibliography}{9}
\bibitem{Besse-book} A. L. Besse, \emph{ Einstein Manifolds}, Classics in Mathematics, Springer-Verlag, (2002).




\bibitem{Greub-book} W. H. Greub, \emph{Multilinear algebra, second edition}, Springer-Verlag, New York (1978).

\bibitem{Greub-Vanstone} W. H. Greub, J. R. Vanstone, \emph{A basic identity in mixed exterior algebra}, Linear \& Multilinear Algebra, Vol 21, N. 1, pp. 41-61, (1987)



\bibitem{Kulkarni} R. S. Kulkarni, \emph{On the Bianchi identities}, Math. Ann. 199, 175-204 (1972).

\bibitem{Labbidoubleforms} M.  L.  Labbi, \emph{ Double forms, curvature structures and the
$(p,q)$-curvatures}, Transactions of the American Mathematical Society, 357,  n10, 3971-3992 (2005).

\bibitem{Labbi-eins} M. L. Labbi, \emph{On generalized Einstein metrics,}  Balkan Journal of Geometry and Its Applications, Vol.15, No.2,   61-69, (2010)

\bibitem{Labbialgebraic} M. L. Labbi, \emph{On some algebraic identities and the exterior product of double forms,} Archivum mathematicum, Vol.  49, No. 4, 241-271  (2013).

\bibitem{Labbi-Bianchi} M. L. Labbi, \emph{Remarks on Bianchi sums and Pontrjagin classes}, ArXiv...
\bibitem{Maillot} H. Maillot,  \emph{Sur les vari\'et\'es riemanniennes a op\'erateur de courbure pur}.  C.R. Acad. Sci. Paris A 278, 1127-1130 (1974).
\bibitem{Stehney} A. Stehney, \emph{Courbure d'ordre p et les classes de Pontrjagin}. Journal of differential geometry,
8, pp. 125-134. (1973).

\bibitem{Thorpe} J. A. Thorpe, \emph{Sectional curvatures and characteristic classes}, Ann. Math. 80, 429-443. (1965).

\bibitem{Vanstone} J. R. Vanstone, \emph{ The Poincar\'e map in mixed exterior algebra}, Canad. Math. Bull., vol 26, (2), (1983).
\end{thebibliography}
\end{document}